\documentclass[11pt,a4paper]{article}
\usepackage[a4paper,
            bindingoffset=0.2in,
            left=1in,
            right=1in,
            top=1in,
            bottom=1in,
            footskip=.25in]{geometry}
\usepackage{amsmath}
\usepackage{amsfonts}
\usepackage{graphicx}
\usepackage{cite}
\usepackage{amsthm}
\usepackage{epstopdf}
\usepackage{float}
\usepackage{authblk}
\usepackage{lipsum}
\usepackage{amsfonts}
\usepackage{graphicx}
\usepackage{epstopdf}
\usepackage{algorithmic}
\usepackage[utf8]{inputenc}
\usepackage{amsmath, amssymb}
\usepackage{geometry}
\usepackage{tikz}
\usepackage{pgfplots}
\usepackage{url}

\newtheorem{lemma}{Lemma}
\newtheorem{theorem}{Theorem}

\newtheorem{definition}{Definition}
\newtheorem{remark}{Remark}

\title{A Unified Regularity Condition for Optimal Control: \\ Bridging LICQ, MFCQ, and Subdifferentials}
\author[1]{M.~E. Abbasov\thanks{abbasov.majid@gmail.com, m.abbasov@spbu.ru}}
\affil[1]{St. Petersburg State University, SPbSU, 7/9 Universitetskaya nab., St. Petersburg, 199034 Russia}

\begin{document}

\maketitle

\begin{abstract}
This paper presents a unified derivation of transversality conditions in optimal 
control problems using exact penalty functions. The key regularity condition is 
that the origin is uniformly separated from the subdifferential of the penalty 
function in a neighborhood of the admissible set. This condition, hereafter 
referred to as the \emph{Unified Separation Condition (USC)}, generalizes the classical Mangasarian--Fromovitz condition for inequalities and linear independence of gradients for equalities; in the smooth case, these classical conditions are equivalent to USC, as shown via Gordan's theorem. The USC remains applicable 
even when constraint functions are nondifferentiable, where classical constraint 
qualifications are not defined. Assuming exactness, we derive transversality 
conditions for all major cases: fixed and free terminal time, equality and 
inequality constraints, moving manifolds, and free left endpoint. Remarkably, 
this approach yields these classical results in a concise and transparent manner, 
avoiding the need for constructing cones of endpoint variations or applying 
separation theorems. The theoretical results are complemented by a numerical 
implementation applied to the time-optimal control of a harmonic oscillator. The numerical implementation converges to the exact solution obtained via Pontryagin's maximum principle combined with transversality conditions, confirming the consistency and practical applicability of the proposed methodology.
\end{abstract}

\subsection*{Keywords}
Optimal control, 
regularity conditions, 
LICQ, 
MFCQ, 
subdifferentials,
constructive nonsmooth analysis, 
transversality conditions, 
exact penalty methods, 
nonsmooth analysis, 
Pontryagin's maximum principle,  
unified separation condition (USC)

\subsection*{MSCcodes}
49K15, 49J52, 49K27, 49J53, 49K30, 49M20

\section{Introduction}

Penalty methods have a long and distinguished history in optimization theory. The first 
rigorous application of penalty functions to variational problems dates back to Courant 
\cite{Courant1943}, who in 1943 used them to study constrained motions. During the 1960s, 
penalty methods became widely adopted for numerical solution of constrained optimization 
problems \cite{Zangwill1967, Eremin1967}. A particularly important development was the 
introduction of exact penalty functions \cite{Eremin1967, Zangwill1967}, which allow 
constrained problems to be reformulated as unconstrained ones without loss of precision. 
For such functions, there exists a finite penalty parameter value such that the solution 
of the original problem coincides with the solution of the penalized problem. The 
foundations of exact penalty theory were laid by Eremin \cite{Eremin1967} and Zangwill 
\cite{Zangwill1967}, and subsequently developed by Demyanov and his school 
\cite{Dem2005, Dem2000, DGP1998, DemRub1995}.

In recent decades, exact penalty methods have found their way into optimal control theory. 
Demyanov, Giannessi, and Karelin \cite{DGK1998} demonstrated the possibility of using 
exact penalties to derive necessary optimality conditions in control problems. Of 
particular importance is the work of Demyanov \cite{Dem2005}, where the exact penalty 
approach was applied to the simplest problem of the calculus of variations. It was shown 
that the Euler equation can be obtained directly from the analysis of the variation of 
the penalty functional, without the classical technique of Lagrange multipliers. Further 
developments in this direction include the work of Demyanov and Tamasyan \cite{DT2011} on 
isoperimetric problems and Dolgopolik \cite{Dolgopolik2019} on exact penalisation of 
terminal and state constraints.

In his monograph \cite{Dem2005} (Chapter 6), Demyanov applies exact penalty methods to 
the general optimal control problem. Using needle variations, he derives Pontryagin's 
maximum principle -- the fundamental necessary condition of optimality. The classical 
formulation of this principle can be found in \cite{Pontryagin1983}, and its numerical 
aspects are discussed in \cite{Moiseev1971}. The derivation of transversality conditions 
is presented in these works using the classical technique of cone variations. However, 
these expositions do not address the derivation of transversality conditions within the 
framework of exact penalty methods, nor do they discuss how the regularity assumptions 
can be formulated in a unified way. The present work aims to fill this gap. 

The study of transversality conditions and regularity assumptions has a rich history. 
The classical regularity conditions for equality and inequality constraints were 
systematized by Mangasarian and Fromovitz \cite{Mangasarian1967}. The relationship 
between these conditions and Gordan's theorem \cite{Gordan1873} is well-known in 
optimization theory and plays a crucial role in our analysis. More recent developments in 
nonsmooth analysis \cite{Clarke1983, Burke1999, abbasov2020optimality} provide the 
necessary tools for working with subdifferentials and exact penalty functions. Extensions 
of approximation techniques to nonsmooth settings have been studied, e.g., in the context 
of Chebyshev approximation \cite{RoshchinaSukhorukovaUgon2024, SukhorukovaUgon2022}.
Second-order subdifferential calculus and its applications to stability have been 
developed by Mordukhovich and Rockafellar \cite{MordukhovichRockafellar2012}, while 
comprehensive treatments of variational analysis can be found in the monographs 
\cite{Mordukhovich2006I, Mordukhovich2006II}. Regularity properties of collections 
of sets were studied by Kruger \cite{Kruger2006}, and a systematic account of 
metric regularity theory is given by Ioffe \cite{Ioffe_2017}. For optimal control problems 
with state constraints, a maximum principle was established by Vinter and Pappas 
\cite{VinterPappas1982}; more recent contributions include penalty-based approaches 
for mixed constraints \cite{dePinhoFerreiraSmirnov2024}. Constrained optimal control 
of sweeping processes was investigated by Colombo, Mordukhovich and Nguyen 
\cite{ColomboMordukhovichNguyen2020}. In particular, the analysis of second-order 
sufficient conditions and strong metric subregularity in optimal control has been 
the subject of intensive research \cite{Osmolovskii2022, Osmolovskii2023, OsmolovskiiVeliov2023}. 
The abstract convexity framework and its connections to augmented Lagrangians 
\cite{BurachikRubinov2007} as well as general properties of abstract subdifferentials 
\cite{MillanSukhorukovaUgon2025} provide further context for the present work. 
A comprehensive treatment of second-order variational analysis and its applications 
can be found in the recent monograph by Mordukhovich \cite{Mordukhovich_2024}. 
The convergence of penalty methods for structured problems has been studied by Hu 
and Ralph \cite{HuRalph2004}. Recent advances in subdifferential calculus for 
infinite sums of functions are due to Hantoute, Kruger and L{\'o}pez \cite{HantouteKrugerLopez2026}, 
while duality aspects of convex infinite optimization are discussed by Goberna 
and Volle \cite{GobernaVolle2022}.

We consider the exact penalty function 
\(\phi(x,u)\) constructed as a sum of an \(L_2\)-penalty for the differential 
constraint and a terminal penalty defined as the maximum of the absolute values 
of the equality constraints and the inequality constraints themselves (the latter 
automatically penalizes only violations due to the max operator). The key regularity 
condition is that the origin is uniformly separated from the subdifferential of the 
penalty function in a neighborhood of the admissible set. For pure terminal constraints, 
this condition reduces to the classical Mangasarian--Fromovitz condition for 
inequalities and to linear independence of gradients for equalities, with the 
equivalence established via Gordan's theorem. Unlike classical 
regularity conditions, which require differentiability of the active constraints, 
the proposed condition remains applicable even when the terminal 
constraints are nondifferentiable. Using only classical variations and 
subdifferential calculus, and assuming the exactness of the penalty function (which 
can be established following the technique developed by Demyanov), we derive 
transversality conditions for all major cases: fixed and free terminal time, equality 
and inequality constraints, moving manifolds, and free left endpoint. This approach 
yields these classical results in a concise and transparent manner, avoiding the need 
for constructing cones of endpoint variations or applying separation theorems.

The power of the proposed approach is demonstrated on the classical time-optimal 
control problem for a harmonic oscillator. A numerical implementation is presented, using grid-based discretization. 
The method converges with high accuracy to the exact solution obtained via 
Pontryagin's maximum principle combined with transversality conditions, confirming 
the consistency and practical applicability of the methodology.

The paper is organized as follows. Section 2 states the optimal control problem. 
Section 3 recalls basic definitions and results from exact penalty theory. 
Section 4 introduces the exactness condition for the penalty function, which we 
propose to call the Unified Separation Condition (USC), and shows that the 
classical regularity conditions appear as its special case. Section 5 briefly 
recalls Pontryagin's maximum principle. Section 6 derives transversality 
conditions for fixed terminal time. Section 7 generalizes these results to free 
terminal time and moving manifolds. Section 8 presents numerical experiments, 
and Section 9 concludes the paper.

\section{Problem Formulation}
\label{sec:problem}

Consider the classical optimal control problem in Mayer form. We seek a control 
\(u(\cdot) \in U\), where 
\[
U = \left\{ u \in P_m(\mathbb{R}) \mid u(t) \in V \ \forall t \in [t_0, T] \right\},
\]
\(V \subset \mathbb{R}^m\) is a compact set, \(P_m(\mathbb{R})\) is the space of piecewise continuous 
\(m\)-dimensional vector-fun\-ctions, and a corresponding state trajectory 
\(x(\cdot)\) in the space \(P_n^1(\mathbb{R})\) of piecewise continuously 
differentiable \(n\)-dimensional vector-func\-ti\-ons that minimize the func\-ti\-onal
\begin{equation}
    J(x,u) = \Phi^0(x(T), T),
    \label{eq:cost}
\end{equation}

subject to the differential constraint
\begin{equation}
    \dot{x}(t) = f(x(t), u(t), t), \quad t \in [t_0, T],
    \label{eq:state}
\end{equation}
the fixed initial condition
\begin{equation}
    x(t_0) = x_0,
    \label{eq:init}
\end{equation}
and terminal constraints (equalities or inequalities)
\begin{equation}
     \begin{cases}
            \Phi^k(x(T), T)= 0, & k \in E,\\
            \Phi^k(x(T), T)\leq 0, & k \in I,
        \end{cases}
    \label{eq:end_constraints}
\end{equation}
where \(E\) and \(I\) denote the index sets for equality and inequality 
constraints respectively, with \(|E| + |I| = p \le n+1\).

The initial time \(t_0\) is assumed to be fixed, while the terminal time \(T\) 
may be either fixed or free (subject to optimization). The functions 
\(\Phi^0\), \(\Phi^k\), \(k \in E \cup I\), are assumed continuously 
differentiable.

We assume that \(f\) is differentiable in \(x\) and Lipschitz in \(x\) uniformly in \(u\) and \(t\), and 
that \(f\) and \(\frac{\partial f}{\partial x}\) are continuous in all arguments. 
At points of discontinuity of \(u\), the derivative \(\dot{x}(t)\) is understood 
in the right-hand sense. Under these assumptions, for every admissible control 
\(u(\cdot)\) there exists a unique piecewise continuously differentiable 
solution \(x(\cdot, u)\) of the initial value problem \eqref{eq:state}--\eqref{eq:init}.

\section{Exact Penalty Functions: Basic Definitions and Results}
\label{sec:penalty}

In this section we recall the main definitions and theorems from the theory of 
exact penalty functions, following \cite{Dem2005, Dem2000}. These results will be 
used to justify the reduction of the constrained problem to an unconstrained one.

Let \([X,\rho]\) be a metric space and \(\Omega \subset X\) a nonempty set. 
Suppose that \(\Omega\) is described in the form
\begin{equation}
    \Omega = \{\xi \in X \mid \phi(\xi) = 0\},
    \label{eq:omega_def}
\end{equation}
where \(\phi: X \to \mathbb{R}\) satisfies \(\phi(\xi) \ge 0\) for all \(\xi \in X\). 
Consider the problem of minimizing a functional \(\mathcal{J}: X \to \mathbb{R}\) over \(\Omega\):
\begin{equation}
    \mathcal{J}(\xi) \to \min_{\xi \in \Omega}.
    \label{eq:original_problem}
\end{equation}

For a fixed \(\lambda \ge 0\), introduce the penalty function
\begin{equation}
    F_\lambda(\xi) = \mathcal{J}(\xi) + \lambda \phi(\xi).
    \label{eq:penalty_function}
\end{equation}

\begin{definition}
    A number \(\lambda^* \ge 0\) is called an \textit{exact penalty constant} for 
    problem \eqref{eq:original_problem} if
    \begin{equation*}
        \inf_{\xi \in X} F_{\lambda^*}(\xi) = \inf_{\xi \in \Omega} \mathcal{J}(\xi).
        \label{eq:exact_penalty_const}
    \end{equation*}
    The function \(F_\lambda\) with \(\lambda > \lambda^*\) is called an \textit{exact 
    penalty function}.
\end{definition}

To state sufficient conditions for the existence of an exact penalty constant, 
we need the notion of the lower derivative.

\begin{definition}
    The lower derivative of \(\phi\) at \(\xi\) is defined as
    \begin{equation*}
        \phi^\downarrow(\xi) = \liminf_{y \to \xi} \frac{\phi(y) - \phi(\xi)}{\rho(\xi,y)}.
        \label{eq:lower_derivative}
    \end{equation*}
\end{definition}

\begin{theorem}[Sufficient conditions for exact penalty]\label{thm:exact_penalty_sufficient}
    Assume that the following conditions hold:
    \begin{enumerate}
        \item \(\inf\limits_{\xi\in X} \mathcal{J}(\xi) > -\infty\);
        \item there exists \(\lambda_0 < \infty\) such that for every \(\lambda \ge \lambda_0\) 
              there is an \(\xi_\lambda \in X\) with \(F_\lambda(\xi_\lambda) = \inf\limits_{\xi\in X} F_\lambda(\xi)\);
        \item there exist \(\delta > 0\) and \(a > 0\) such that
              \begin{equation}
                  \phi^\downarrow(\xi) \leq -a < 0 \quad \forall \xi \in \Omega_\delta \setminus \Omega,
              \end{equation}
              where \(\Omega_\delta = \{\xi \in X \mid \phi(\xi) < \delta\}\);
        \item the function \(\mathcal{J}\) is Lipschitz on \(\Omega_\delta \setminus \Omega\).
    \end{enumerate}
    Then there exists \(\lambda^* \ge \lambda_0\) such that for all \(\lambda > \lambda^*\)
    \begin{equation}
        \phi(\xi_\lambda) = 0, \quad \mathcal{J}(\xi_\lambda) = \inf_{\xi\in\Omega} \mathcal{J}(\xi),
    \end{equation}
    i.e., \(\xi_\lambda\) is a solution of the original problem.
\end{theorem}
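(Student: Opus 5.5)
We argue by contradiction: if the minimizers \(\xi_\lambda\) were infeasible for arbitrarily large \(\lambda\), then condition 3 would produce a nearby point with strictly smaller value of \(F_\lambda\).

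\emph{Step 1: the penalty term vanishes asymptotically.} Fix some \(\bar\xi \in \Omega\) and set \(M = \inf_X \mathcal{J} > -\infty\), which is finite by condition 1. For \(\lambda \ge \lambda_0\), minimality of \(\xi_\lambda\) gives
\[
\mathcal{J}(\xi_\lambda) + \lambda \phi(\xi_\lambda) \le \mathcal{J}(\bar\xi),
\]
hence \(\phi(\xi_\lambda) \le (\mathcal{J}(\bar\xi) - M)/\lambda \to 0\). Consequently there is \(\lambda_1 \ge \lambda_0\) such that \(\xi_\lambda \in \Omega_\delta\) for all \(\lambda \ge \lambda_1\).

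\emph{Step 2: a descent step for infeasible minimizers.} Let \(L\) be the Lipschitz constant of \(\mathcal{J}\) on \(\Omega_\delta\setminus\Omega\), and put \(\lambda^* = \max\{\lambda_1, L/a\}\). Suppose that for some \(\lambda > \lambda^*\) we have \(\phi(\xi_\lambda) > 0\). Then \(\xi_\lambda \in \Omega_\delta \setminus \Omega\), so by condition 3, \(\phi^\downarrow(\xi_\lambda) \le -a\). By the definition of the \(\liminf\), for any \(\varepsilon \in (0,a)\) there are points \(y\) arbitrarily close to \(\xi_\lambda\) with
\[
\phi(y) - \phi(\xi_\lambda) \le -(a-\varepsilon)\,\rho(\xi_\lambda, y).
\]
Choosing \(y\) close enough, continuity is not needed: we only need \(\phi(y) < \phi(\xi_\lambda) < \delta\), which holds automatically since \(\phi(y)\) strictly decreased.

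\emph{Step 3: the contradiction.} We need \(y\) to lie in the region where the Lipschitz estimate is available, and this is the delicate point. If \(\phi(y) > 0\), then \(y \in \Omega_\delta\setminus\Omega\). Condition 4 is stated on \(\Omega_\delta\setminus\Omega\), which need not be convex or connected. I would read it, as is customary, as the estimate \(|\mathcal{J}(y) - \mathcal{J}(\xi)| \le L\rho(y,\xi)\) for all \(y,\xi\) in that set, or in its closure if \(y\) hits \(\Omega\). One could also insist on Lipschitz continuity on \(\Omega_\delta\) itself, which is how Demyanov uses it. Under this reading,
\[
F_\lambda(y) - F_\lambda(\xi_\lambda) \le L\rho - \lambda(a-\varepsilon)\rho < 0
\]
once \(\varepsilon\) is small enough that \(\lambda(a-\varepsilon) > L\), which is possible because \(\lambda > L/a\). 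This contradicts the minimality of \(\xi_\lambda\). Hence \(\phi(\xi_\lambda) = 0\) for all \(\lambda > \lambda^*\).

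\emph{Step 4: optimality for the original problem.} Since \(\xi_\lambda \in \Omega\), for every \(\xi \in \Omega\) we have
\[
\mathcal{J}(\xi_\lambda) = F_\lambda(\xi_\lambda) \le F_\lambda(\xi) = \mathcal{J}(\xi),
\]
so \(\mathcal{J}(\xi_\lambda) = \inf_{\Omega} \mathcal{J}\).

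The main obstacle is the one in Step 3: making sure the Lipschitz estimate of condition 4 applies between \(\xi_\lambda\) and the descent point \(y\).
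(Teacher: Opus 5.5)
The paper gives no proof of this theorem; it only recalls it from Demyanov's work. Your descent argument is the natural one, and Steps 1, 2 and 4 are correct. The difficulty you flag in Step 3, however, is a genuine gap. It cannot be closed under the hypotheses as written: with condition 4 imposed only on $\Omega_\delta\setminus\Omega$ and no semicontinuity of $\phi$, the statement is false.

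Take $X=\mathbb{R}$ with $\rho(x,y)=|x-y|$ and $\Omega=\{k+1/n : k,n\in\mathbb{N},\ n\ge 2\}$. Set
\begin{itemize}
\item $\phi=0$ and $\mathcal{J}=1$ on $\Omega$;
\item $\phi(k)=k^{-2}$ and $\mathcal{J}(k)=-1/k$ for $k\in\mathbb{N}$;
\item $\phi=1$ and $\mathcal{J}=0$ elsewhere.
\end{itemize}
Every hypothesis holds:
\begin{itemize}
\item $\inf_X\mathcal{J}=-1>-\infty$.
\item For $\delta=1/2$, $\Omega_\delta\setminus\Omega=\{k\in\mathbb{N} : k\ge 2\}$, a closed set on which $\mathcal{J}$ is $1$-Lipschitz.
\item $\phi^\downarrow(k)=-\infty$ along $y=k+1/n$.
\item $F_\lambda$ equals $1$ on $\Omega$, $\lambda$ off $\Omega\cup\mathbb{N}$, and $(\lambda-k)/k^2$ at $k\in\mathbb{N}$. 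So for every $\lambda\ge 1$ its minimum is attained and negative.
\end{itemize}
Yet that minimum sits at an infeasible integer, whereas $\inf_\Omega\mathcal{J}=1$. Every descent point $y$ lies in $\Omega$, which is exactly your worry. Your ``closure'' reading does not help: here $\Omega_\delta\setminus\Omega$ is already closed, condition 4 holds in that sense, and $y$ is not in it.

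So an additional hypothesis must be stated explicitly, not read into condition 4. Either of the following suffices:
\begin{itemize}
\item Your second option, $\mathcal{J}$ Lipschitz on all of $\Omega_\delta$, works verbatim, because $\phi(y)<\phi(\xi_\lambda)<\delta$ already puts $y$ in $\Omega_\delta$.
\item Keep condition 4 as stated and assume $\phi$ lower semicontinuous (it is continuous in the paper's optimal control setting). Then $\phi(y)>\phi(\xi_\lambda)/2>0$ for all $y$ sufficiently close to $\xi_\lambda$, so the descent points lie in $\Omega_\delta\setminus\Omega$. Your estimate $F_\lambda(y)-F_\lambda(\xi_\lambda)\le\bigl(L-\lambda(a-\varepsilon)\bigr)\rho(\xi_\lambda,y)<0$ then goes through unchanged.
\end{itemize}
With either assumption added explicitly, your proof is complete.
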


\begin{theorem}[Equivalence of local minima] \label{thm:local_minima_equivalence}
    Let the conditions of Theorem \ref{thm:exact_penalty_sufficient} hold. Then there exists 
    \(\lambda^* < \infty\) such that for all \(\lambda > \lambda^*\):
    \begin{enumerate}
        \item[(i)] Any local minimizer of \(F_\lambda\) on \(\Omega_\delta\) is a local 
              minimizer of \(\mathcal{J}\) on \(\Omega\).
        \item[(ii)] Any local minimizer of \(\mathcal{J}\) on \(\Omega\) is a local minimizer of 
              \(F_\lambda\) on \(\Omega_\delta\).
    \end{enumerate}
\end{theorem}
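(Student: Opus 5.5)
The plan is to use the standard "pull toward \(\Omega\)" argument that underlies Theorem~\ref{thm:exact_penalty_sufficient}. The central estimate is a metric error bound on \(\Omega_\delta\): for every \(\xi \in \Omega_\delta\),
\begin{equation*}
    \rho(\xi,\Omega) \le \frac{\phi(\xi)}{a}.
\end{equation*}
I would derive it from condition 3 by an Ekeland-type descent argument. Fix \(\xi\in\Omega_\delta\setminus\Omega\) and any \(a'<a\). Since \(\phi^\downarrow(y)\le -a\) at every \(y \in \Omega_\delta\setminus\Omega\), there is a point \(y\) arbitrarily close to the current one with \(\phi(y)\le\phi(\text{current}) - a'\rho(\text{current},y)\). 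Iterating this step, with a transfinite or Zorn-type choice of maximal steps (or directly Ekeland's principle applied to \(\phi\) on the closure of \(\Omega_\delta\)), produces a sequence along which \(\phi\) decreases at least at rate \(a'\) per unit length. Because \(\phi\ge0\), the total length is at most \(\phi(\xi)/a'\). The limit point must lie in \(\Omega\): otherwise condition 3 would allow yet another descent step, contradicting maximality or the Ekeland point property. I expect this to be the main obstacle. It needs completeness of \(X\) and lower semicontinuity of \(\phi\); the paper has not stated these, so I would add them as tacit assumptions, as in Demyanov's setting. With them in hand, letting \(a'\to a\) gives the bound.

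Let \(L\) be the Lipschitz constant of \(\mathcal{J}\) from condition 4 (on \(\Omega_\delta\)), and set \(\lambda^* = \max(\lambda_0, L/a)\).

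For (ii), let \(\xi^*\in\Omega\) be a local minimizer of \(\mathcal{J}\) on \(\Omega\), with radius \(r\). Take \(\xi\in\Omega_\delta\) with \(\rho(\xi,\xi^*)<r/2\) and \(\phi(\xi)\) small; by continuity considerations this holds after shrinking the neighborhood, and otherwise \(\lambda\phi(\xi)\) is already large. Pick \(y\in\Omega\) with \(\rho(\xi,y)\le(1+\epsilon)\phi(\xi)/a\). Then \(y\) lies within \(r\) of \(\xi^*\), so
\[
F_\lambda(\xi)=\mathcal{J}(\xi)+\lambda\phi(\xi)\ \ge\ \mathcal{J}(y)-L\rho(\xi,y)+\lambda\phi(\xi)\ \ge\ \mathcal{J}(\xi^*)+\Bigl(\lambda-\tfrac{(1+\epsilon)L}{a}\Bigr)\phi(\xi)\ \ge\ F_\lambda(\xi^*)
\]
once \(\lambda>\lambda^*\) and \(\epsilon\) is small. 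Controlling \(\rho(\xi,y)\) to keep \(y\) inside the radius uses the error bound again, because \(\phi(\xi)\) is small near \(\xi^*\). This needs \(\phi\) to be upper semicontinuous at points of \(\Omega\), or else a restriction of the neighborhood to a small sublevel set. I would state this as part of the tacit regularity assumptions.

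For (i), let \(\xi^*\) be a local minimizer of \(F_\lambda\) on \(\Omega_\delta\) with \(\lambda>\lambda^*\). First show \(\phi(\xi^*)=0\). If not, then \(\xi^*\in\Omega_\delta\setminus\Omega\), and condition 3 gives points \(y\) arbitrarily close to \(\xi^*\) with \(\phi(y)-\phi(\xi^*)\le -a'\rho(\xi^*,y)\), where \(a'\) is chosen with \(L/\lambda<a'<a\). For such \(y\),
\[
F_\lambda(y)-F_\lambda(\xi^*)\le L\rho-\lambda a'\rho<0,
\]
which contradicts local minimality. So \(\xi^*\in\Omega\), and on \(\Omega\) we have \(F_\lambda=\mathcal{J}\). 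Hence \(\xi^*\) minimizes \(\mathcal{J}\) locally on \(\Omega\), since \(\Omega\subset\Omega_\delta\). Part (i) is thus routine; the real work is the error bound in the first paragraph.
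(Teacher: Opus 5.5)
The paper gives no proof of this theorem; it is recalled, like Theorem~\ref{thm:exact_penalty_sufficient}, from Demyanov's exact-penalty theory. Your route is the standard one there: a metric error bound \(\rho(\xi,\Omega)\le\phi(\xi)/a\) on \(\Omega_\delta\), followed by a Lipschitz comparison. Part (i) is fine. The Ekeland version of the error bound is correct once \(X\) is complete and \(\phi\) is lower semicontinuous, and you are right to add these. The Ekeland point \(\bar y\) with constant \(a'<a\) satisfies \(\phi(\bar y)+a'\rho(\xi,\bar y)\le\phi(\xi)<\delta\), and \(\phi(\bar y)>0\) would contradict \(\phi^\downarrow(\bar y)\le -a\).

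\textbf{The gap: you misread condition~4.} You take it as Lipschitz continuity of \(\mathcal{J}\) on \(\Omega_\delta\). Condition~4 of Theorem~\ref{thm:exact_penalty_sufficient} gives this only on \(\Omega_\delta\setminus\Omega\). In (ii) you need \(\mathcal{J}(\xi)\ge\mathcal{J}(y)-L\rho(\xi,y)\) with \(\xi\notin\Omega\) and \(y\in\Omega\). No argument can supply this, because (ii) is false under the literal hypothesis. Take \(X=\mathbb{R}\), \(\phi(x)=\min\{|x|,|x-3|\}\), \(\mathcal{J}(0)=0\), \(\mathcal{J}(x)=-1\) for \(x\neq 0\), and \(\delta=a=1\).
\begin{itemize}
\item All four conditions hold. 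Here \(\Omega=\{0,3\}\), and \(F_\lambda\) attains its minimum \(-1\) at \(3\) for every \(\lambda\). On \(\Omega_1\setminus\Omega\) we have \(\phi^\downarrow=-1\) and \(\mathcal{J}\equiv -1\).
\item \(X\) is complete and \(\phi\) is continuous, so your extra assumptions also hold.
\item Yet \(0\) is a local minimizer of \(\mathcal{J}\) on \(\Omega\), while \(F_\lambda(x)=-1+\lambda|x|<0=F_\lambda(0)\) for \(0<|x|<1/\lambda\), for every \(\lambda\).
\end{itemize}
So Lipschitz continuity of \(\mathcal{J}\) on all of \(\Omega_\delta\) must be stated as an additional hypothesis, next to completeness and lower semicontinuity. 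It does not follow from condition~4. Part (i) does survive with the literal condition~4: by lower semicontinuity of \(\phi\), the descent points \(y\to\xi^*\) eventually satisfy \(\phi(y)>0\), so they stay in \(\Omega_\delta\setminus\Omega\).

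\textbf{A simplification for (ii).} Once \(\mathcal{J}\) is Lipschitz on \(\Omega_\delta\), the upper semicontinuity of \(\phi\) you invoke is unnecessary. Put \(c=ar/(2(1+\epsilon))\). If \(\rho(\xi,\xi^*)<s\) and \(\phi(\xi)\ge c\), then \(F_\lambda(\xi)\ge \mathcal{J}(\xi^*)-Ls+\lambda c\ge\mathcal{J}(\xi^*)\) whenever \(Ls\le\lambda c\). If \(\phi(\xi)<c\) and \(s\le r/2\), your error-bound chain applies unchanged, since the chosen \(y\in\Omega\) then lies within \(r\) of \(\xi^*\). Thus your argument, with \(\lambda^*=\max\{\lambda_0,L/a\}\) uniform over all minimizers, needs exactly three assumptions: completeness of \(X\), lower semicontinuity of \(\phi\), and Lipschitz continuity of \(\mathcal{J}\) on \(\Omega_\delta\).
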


These theorems justify the reduction of the constrained problem to the 
unconstrained minimization of the penalty function for sufficiently large values 
of the penalty parameter. 

\subsection{Penalty function for the optimal control problem}

For our optimal control problem \eqref{eq:cost}--\eqref{eq:end_constraints}, 
where the cost functional $\mathcal{J}$ is \(J(x,u) = \Phi^0(x(T),T)\), we perform the change 
of variables \(z = \dot{x}\). The state trajectory is then recovered as 
\(x(t) = x_0 + \int_{t_0}^t z(\tau)\,d\tau\), and the cost becomes
\begin{equation}
    J(z,u) = \Phi^0\!\left(x_0 + \int_{t_0}^T z(t)\,dt, T\right).
    \label{eq:cost_z}
    \end{equation}
The penalty term \(\phi\) is constructed explicitly as a sum of two components:
\begin{equation}
    \phi(z,u) = \phi_{\mathrm{diff}}(z,u) + \phi_{\mathrm{term}}(z,u),
    \label{eq:phi_decomposition}
\end{equation}
where \(\phi_{\mathrm{diff}}\) is given by 
\begin{align}
    \phi_{\mathrm{diff}}(z,u) = \left[\int_{t_0}^T \left(z(t) - f\left(x_0 + \int_{t_0}^t z(s)\,ds,u(t),t\right)\right)^2 \,dt \right]^{1/2}, \label{eq:phi_diff}
\end{align}
 and the terminal part is
\begin{equation}
    \phi_{\mathrm{term}}(x,u) = \max_{k\in E,\; j\in I}\big\{ |\Phi^k(x(T),T)|, \; \Phi^j(x(T),T)\big\}.
    \label{eq:phi_term}
\end{equation}
This unified formulation treats equality and inequality constraints in a symmetric 
way: the absolute value handles the two-sided nature of equalities, while the 
positive part for inequalities is embedded in the max operator (since for active 
constraints \(\Phi^j = 0\), the max includes both \(\Phi^j\) and \(-\Phi^j\) implicitly).

Obviousely, $$\Omega=\left\{(z,u)\mid \phi(z,u)=0,\; z\in P_n(\mathbb{R}),\: u\in U\right\}=\bigcup_{u\in U}\Omega_u,$$ where
$$\Omega_u=\left\{(z,u)\mid \phi(z,u)=0,\; z\in P_n(\mathbb{R})\right\}.$$ For a fixed control \(u(\cdot)\), the differential equation \eqref{eq:state} with 
initial condition \eqref{eq:init} has a unique solution \(x(\cdot,u)\). Hence, 
for each \(u\) there is a uniquely determined state trajectory, and if 
 \begin{equation}\label{fixed_u_penaly_func}
J(z,u)+\lambda\phi(z,u)
\end{equation}
is exact for any $u\in U_{\mathrm{ad}}$, where
$$U_{\mathrm{ad}} = \left\{ u \in U \mid \Phi^k(x(T,u),T)=0,\ \Phi^j(x(T,u),T)\le 0\ k\in E,\ j\in I \right\},$$
then the only point of $\Omega_u$ is the minimizer of \eqref{fixed_u_penaly_func} and therefore the initial problem is equivalent to the minimization of \eqref{fixed_u_penaly_func} subject to $u\in U_{\mathrm{ad}} $. Moreover, under the regularity conditions introduced in Section~\ref{sec:regularity_condition} (which we will later refer to as the Unified Separation Condition), the same penalty parameter \(\lambda\) can be chosen uniformly for all admissible controls; this can be 
shown by adapting the technique developed in \cite{Dem2005} for similar purposes.

\section{Exactness of the Penalty Function}
\label{sec:exactness}

We now show that under a suitable regularity condition, the penalty function
\(F_\lambda = \mathcal{J} + \lambda\phi\) for the problem \eqref{eq:cost}--\eqref{eq:end_constraints} is exact for sufficiently large 
\(\lambda\). The key condition is that the origin does not belong to the 
subdifferential of \(\phi\) in a neighborhood of the admissible set $\Omega$.

We assume that there exists a neighborhood \(B\) of the admissible set $\Omega$ and a 
  positive constant \(a \) such that for all \((z,u) \in B\) with \(\phi(z,u) > 0\),
 \begin{equation}\label{ass:regularity}
    \operatorname{dist}\bigl(0, \partial\phi(z,u)\bigr) \ge a > 0.
\end{equation}
We propose to call condition \eqref{ass:regularity} the 
{Unified Separation Condition (USC)}.

Under this assumption, for any \((z,u) \in B\), the 
subdifferential \(\partial\phi(z,u)\) is a convex compact set not containing 
the origin. By the separation theorem in Hilbert space, there exists a 
direction \(h\) (depending on \((z,u)\)) such that
\[
\phi'(z,u; h) = \max_{v \in \partial\phi(z,u)} \langle v, h \rangle \le -a < 0,
\]
hence \(\phi^\downarrow(z,u) \le -a < 0\). Theorem~\ref{thm:exact_penalty_sufficient} 
then guarantees the existence of a finite \(\lambda^*\) such that for all 
\(\lambda > \lambda^*\) the penalty function \(F_\lambda\) is exact.

Condition \ref{ass:regularity} can be analyzed in terms of the components 
of \(\phi = \phi_{\mathrm{diff}} + \phi_{\mathrm{term}}\). Recall (see \cite{Dem2005} for details) that  when \(\phi_{\mathrm{diff}}(z,u) = 0\) the 
subdifferential of \(\phi_{\mathrm{diff}}\) is given by formula:
\[
\partial\phi_{\mathrm{diff}}(z,u) = \left\{ v(\cdot) \;\middle|\; \|v(t)\| \le 1,\; 
v(t) = \int_t^T \left(\frac{\partial f(x(\tau),u(\tau),\tau)}{\partial x}\right)^* v(\tau)\,d\tau \right\},
\]
and when \(\phi_{\mathrm{diff}}(z,u) > 0\), this set reduces to a singleton 
\(\{\nabla\phi_{\mathrm{diff}}(z,u)\}\) with
\[
\nabla\phi_{\mathrm{diff}}(z,u)(t) = w(t) - \int_t^T \left(\frac{\partial f}{\partial x}\right)^* w(\tau)\,d\tau,
\]
\[
w(t) = \frac{1}{\phi_{\mathrm{diff}}(z,u)}\left( z(t) - f\!\left( x_0 + \int_{t_0}^t z(\tau)\,d\tau,\; u(t), t \right) \right),
\]
and \(\|w\|_{L_2} = 1\).

For the terminal part, let \(x_T = x_0 + \int_{t_0}^T z(t)\,dt\),
and define the sets of active constraints:
\[
E_{\mathrm{act}}(z) = \left\{ k \in E \;\middle|\; |\Phi^k(x_T,T)| = \phi_{\mathrm{term}}(z) \right\},
\]
\[
I_{\mathrm{act}}(z) = \left\{ j \in I \;\middle|\; \Phi^j(x_T,T) = \phi_{\mathrm{term}}(z) \right\}.
\]
Then it is obvious that for \(\phi_{\mathrm{term}}(z) > 0\) we have
\begin{multline*}
\partial\phi_{\mathrm{term}}(z) = \operatorname{co}\biggl\{ \nabla\Phi^k(x_T,T)\,\operatorname{sign}\Phi^k(x_T,T),\; \nabla\Phi^j(x_T,T)
\;\bigg|\; k\in E_{\mathrm{act}}(z),\; j\in I_{\mathrm{act}}(z) \biggr\},
\end{multline*}
and when \(\phi_{\mathrm{term}}(z) = 0\), the 
subdifferential is
\begin{multline*}
\partial\phi_{\mathrm{term}}(z) = \operatorname{co}\biggl\{ \nabla\Phi^k(x_T,T),\; -\nabla\Phi^k(x_T,T),\; \nabla\Phi^j(x_T,T)
\;\bigg|\; k\in E_{\mathrm{act}}(z),\; j\in I_{\mathrm{act}}(z)\biggr\}.
\end{multline*}

Since \(\partial\phi(z,u) = \partial\phi_{\mathrm{diff}}(z,u) + \partial\phi_{\mathrm{term}}(z)\), 
condition \ref{ass:regularity} requires that the distance from the origin to 
this sum be uniformly bounded away from zero in a neighborhood of the optimum. 
These conditions are natural generalizations of the classical regularity 
assumptions. In particular:
\begin{itemize}
    \item When only terminal constraints are present (\(E \cup I \neq \emptyset\) 
          and \(\phi_{\mathrm{diff}} \equiv 0\)), condition \eqref{ass:regularity} 
          reduces to \(0 \notin \partial\phi_{\mathrm{term}}(z)\) for points with 
          \(\phi_{\mathrm{term}}(z) > 0\), which is equivalent to the Mangasarian–Fromovitz 
          condition for inequalities and to linear independence of gradients for 
          equalities (see Section~\ref{sec:regularity_condition}).
    \item When only the differential constraint is present, condition 
          \eqref{ass:regularity} reduces to the requirement that 
          \(\nabla\phi_{\mathrm{diff}} \neq 0\) whenever \(\phi_{\mathrm{diff}} > 0\), 
          which is always true.
\end{itemize}

In the numerical experiments of Section~\ref{sec:numerical}, we verify that 
this condition is satisfied, which is consistent with the theoretical conditions for exact penalization.

\subsection{The Unified Separation Condition (USC)}
\label{sec:regularity_condition}

In this section we analyze the USC regularity condition introduced in 
Section~\ref{sec:exactness}, namely that there exists a neighborhood 
\(B\) of the admissible set $\Omega$ and a constant \(a > 0\) such that
\[
\operatorname{dist}\bigl(0, \partial\phi(z,u)\bigr) \ge a \quad \forall (z,u) \in B.
\]

This condition can be expressed in terms of the components 
\(\phi = \phi_{\mathrm{diff}} + \phi_{\mathrm{term}}\). Recall that the subdifferential 
of \(\phi_{\mathrm{diff}}\) is given by Demyanov's formula, and the subdifferential 
of \(\phi_{\mathrm{term}}\) depends on the active terminal constraints.

Without loss of generality, in what follows we assume that all constraints 
in \(E\cup I\) are active at the optimum, i.e., \(\Phi^k(x^*(T),T)=0\) for all 
\(k\in E\cup I\). Inactive constraints can be ignored as they do not affect 
the local analysis.

We now examine what this condition implies in three cases: when only terminal 
constraints are violated, when only the differential constraint is violated, 
and when both are violated.

\paragraph{Pure terminal constraints (\(\phi_{\mathrm{diff}} \equiv 0,\ \phi_{\mathrm{term}} > 0\))}
This case reduces to the classical condition \(0 \notin \partial\phi_{\mathrm{term}}\), 
which is equivalent to the Mangasarian--Fromovitz condition for inequalities 
and to linear independence of gradients for equalities (see below).

\paragraph{Pure differential constraint (\(\phi_{\mathrm{diff}} > 0,\ \phi_{\mathrm{term}} \equiv 0\))}
Here \(\partial\phi = \{\nabla\phi_{\mathrm{diff}}\}\) and therefore this case
reduces to \(\nabla\phi_{\mathrm{diff}} \neq 0\), which is always true when 
\(\phi_{\mathrm{diff}} > 0\).

\paragraph{Both constraints violated (\(\phi_{\mathrm{diff}} > 0,\ \phi_{\mathrm{term}} > 0\))}
Here \(\partial\phi = \{\nabla\phi_{\mathrm{diff}}\} + \partial\phi_{\mathrm{term}}\), 
and the condition requires that \(-\nabla\phi_{\mathrm{diff}} \notin \partial\phi_{\mathrm{term}}\).

The classical regularity conditions (LICQ for equalities, MFCQ for 
inequalities) are recovered as special cases when \(\phi_{\mathrm{diff}} = 0\) 
and the terminal constraints are active. In the general case, the condition 
\(\operatorname{dist}(0,\partial\phi) \ge a\) is a natural extension that 
accounts for the interaction between the differential and terminal constraints.

The following subsections provide a detailed analysis of the classical regularity conditions for pure terminal constraints, showing how they imply the USC condition (for equalities via LICQ, for inequalities via MFCQ).

\subsubsection{The Case of Only Equality Constraints (\(I = \emptyset\))}

Suppose only equality constraints are present and the differential 
constraint is absent (\(\phi_{\mathrm{diff}} \equiv 0\)). Then \(\phi = \phi_{\mathrm{term}}\).

\begin{lemma}[LICQ implies USC for equality constraints]
    \label{lem:licq_to_usc}
    Suppose that only equality constraints are present (\(I = \emptyset\)) and 
    that the classical LICQ holds at the optimal pair \((x^*,u^*)\), i.e., the 
    gradients \(\nabla\Phi^k(x^*(T),T)\), \(k \in E\), are linearly independent. 
    Then there exists a neighborhood \(B\) of the optimal pair \((z^*,u^*)\) 
    and a constant \(a > 0\) such that for all \((z,u) \in B\) with 
    \(\phi_{\mathrm{term}}(z) > 0\) we have
    \[
    \operatorname{dist}\bigl(0, \partial\phi_{\mathrm{term}}(z)\bigr) \ge a.
    \]
    In other words, the Uniform Separation Condition (USC) holds.
\end{lemma}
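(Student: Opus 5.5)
We reduce the statement to a finite-dimensional compactness argument on the terminal point. Since \(I=\emptyset\), the function \(\phi_{\mathrm{term}}\) depends on \((z,u)\) only through \(x_T = x_0+\int_{t_0}^T z(t)\,dt\), which is continuous in \(z\). For \(\phi_{\mathrm{term}}(z)>0\), the formula above gives
\[
\partial\phi_{\mathrm{term}}(z)=\operatorname{co}\bigl\{ s_k\nabla\Phi^k(x_T,T) \;\big|\; k\in E_{\mathrm{act}}(z)\bigr\},\qquad s_k=\operatorname{sign}\Phi^k(x_T,T)\in\{\pm1\},
\]
where \(s_k\neq 0\) because active indices satisfy \(|\Phi^k|=\phi_{\mathrm{term}}>0\). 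Any element of this set has the form \(\sum_{k\in E_{\mathrm{act}}}\alpha_k s_k\nabla\Phi^k(x_T,T)\) with \(\alpha_k\ge0\) and \(\sum\alpha_k=1\). So it suffices to bound \(\bigl\|\sum_k \beta_k\nabla\Phi^k(x_T,T)\bigr\|\) from below, uniformly over all coefficient vectors \(\beta\in\mathbb{R}^{|E|}\) with \(\|\beta\|_1=1\). The signs, and the zero entries for inactive indices, are absorbed into \(\beta\). This removes all dependence on which constraints happen to be active.

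\textbf{Key step.} Define the Gram-type quantity
\[
m(y)=\min_{\|\beta\|_1=1}\Bigl\|\sum_{k\in E}\beta_k\nabla\Phi^k(y,T)\Bigr\| .
\]
It is a minimum over a compact set of a function jointly continuous in \((y,\beta)\), because the \(\Phi^k\) are \(C^1\). Hence \(m\) is continuous in \(y\). By LICQ, \(m(x^*(T))>0\): a zero would give a nontrivial vanishing linear combination of the gradients. By continuity there is \(r>0\) with \(m(y)\ge a:=m(x^*(T))/2\) for \(\|y-x^*(T)\|<r\).

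Then choose the neighborhood \(B\) of \((z^*,u^*)\) as the set of \((z,u)\) with \(\|z-z^*\|_{L_1}\) (or \(L_2\), using \(\|\cdot\|_{L_1}\le\sqrt{T-t_0}\,\|\cdot\|_{L_2}\)) less than \(r\). Then \(\|x_T-x^*(T)\|<r\), so every element of \(\partial\phi_{\mathrm{term}}(z)\) has norm at least \(a\). This is exactly \(\operatorname{dist}(0,\partial\phi_{\mathrm{term}}(z))\ge a\).

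The main obstacle is the uniformity of the bound, since the active set \(E_{\mathrm{act}}(z)\) and the signs jump discontinuously as \(z\) varies. A pointwise argument at each \(z\) would not give one constant \(a\). Minimizing over the whole \(\ell_1\)-sphere of coefficients on all of \(E\) sidesteps this and yields a single continuous lower bound. The terminal time \(T\) is fixed here, so no continuity in \(T\) is needed. If \(T\) were variable, one would include it as an extra argument of \(m\) in the same way.
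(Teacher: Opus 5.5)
Your proof is correct, and it obtains the uniform constant by a different mechanism than the paper's proof.

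The paper first argues pointwise: the convex hull of linearly independent vectors with fixed signs misses the origin. It then gets uniformity by contradiction. It takes a sequence with $\operatorname{dist}(0,\partial\phi_{\mathrm{term}}(z_j))\to 0$ and passes to a subsequence on which the active set (and, implicitly, the sign pattern) is fixed. It then uses continuity of the distance from the origin to the convex hull of finitely many converging vectors.

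You instead absorb all possible active sets and sign patterns at once into the $\ell_1$-unit sphere of coefficients, so one continuous function $m(y)$ controls every case. This gives a direct, constructive argument with the explicit constant $a=m(x^*(T))/2$. It also gives a concrete neighborhood $B$: an $L_1$ ball, or a suitably rescaled $L_2$ ball.

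A real advantage of your route is that it never uses the paper's assertions that $E_{\mathrm{act}}(z)$ and $\operatorname{sign}\Phi^k(x_T,T)$ are constant near the optimum. Those assertions are false. Each $\Phi^k$ vanishes at $x^*(T)$ with nonzero gradient, so it takes both signs in every neighborhood, and different constraints can dominate there. In the paper, only the finite subsequence extraction rescues uniformity. In your argument, the discontinuity of the active set is simply irrelevant.

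The price of your relaxation is that $m$ minimizes over a larger set than the actual subdifferentials, so your $a$ may be smaller than the sharp bound. That does not matter for the lemma.

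The same device also handles the mixed case cleanly. Restrict the coefficients of the inequality gradients to be nonnegative. Positivity of $m$ at $x^*(T)$ then becomes positive linear independence of the active gradients, which is equivalent to the mixed Mangasarian--Fromovitz condition.
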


\begin{proof}
Take any point $(z,u)$ in a sufficiently small neighborhood of optimal pair \((z^*,u^*)\) (and hence to the admissible set) with \(\phi_{\mathrm{term}}(z) > 0\). 
Let \(E_{\mathrm{act}}(z) = \{ k \in E \mid |\Phi^k(x_T,T)| = \phi_{\mathrm{term}}(z) \}\) 
be the set of active constraints at such a point. By continuity, in a 
sufficiently small neighborhood of \((z^*,u^*)\), the set \(E_{\mathrm{act}}(z)\) 
is constant; without loss of generality, we assume that in this 
neighborhood all equality constraints are active, i.e., \(E_{\mathrm{act}}(z) = E\).
(Inactive constraints can be ignored, as they do not affect the 
subdifferential.)

The subdifferential of \(\phi_{\mathrm{term}}\) at such a point is
\[
\partial\phi_{\mathrm{term}}(z) = \operatorname{co}\bigl\{ \nabla\Phi^k(x_T,T)\,\operatorname{sign}\Phi^k(x_T,T) \mid k \in E \bigr\}.
\]

Since at the optimum \((x^*,u^*)\) the gradients \(\nabla\Phi^k(x^*(T),T)\) are 
linearly independent (classical LICQ), by continuity they remain linearly 
independent in a neighborhood of \((x^*,u^*)\). Moreover, the signs 
\(\operatorname{sign}\Phi^k(x_T,T)\) are constant in a sufficiently small 
neighborhood of \((z^*,u^*)\) (as each \(\Phi^k\) is continuous and changes 
sign only across the admissible set, but in a small enough neighborhood 
the sign is fixed because the admissible set is approached from one side).
Therefore, the convex hull of a finite set of linearly independent vectors 
(with fixed signs) does not contain the origin. Consequently, for each \(z\) sufficiently close to \((z^*,u^*)\) we have 
\(\operatorname{dist}(0,\partial\phi_{\mathrm{term}}(z)) > 0\). To obtain a uniform 
bound \(a > 0\), suppose there exists a sequence \(\{z_k\}\) converging to 
\((z^*,u^*)\) such that \(\operatorname{dist}(0,\partial\phi_{\mathrm{term}}(z_k)) \to 0\). 
Since the set of possible active constraint sets is finite, we can extract 
a subsequence where the same subset \(E_{\mathrm{act}}\) is active. On this 
subsequence, the subdifferential is the convex hull of a fixed set of 
linearly independent vectors (the signs are constant). The distance from 
the origin to this convex hull is positive and depends continuously on 
the vectors, which converge to their limits. Hence, the distance cannot 
tend to zero, a contradiction. Therefore, there exists \(a > 0\) such that 
\(\operatorname{dist}(0,\partial\phi_{\mathrm{term}}(z)) \ge a\) for all \(z\) 
in a sufficiently small neighborhood of \((z^*,u^*)\).
\end{proof}

Thus, the classical LICQ implies USC. Since USC remains applicable even when the constraints are nondifferentiable (as illustrated in the example below), it is a strictly more general condition than LICQ.

\subsubsection{The Case of Only Inequality Constraints (\(E = \emptyset\))}

We now consider the situation where only inequality constraints are present 
and the differential constraint is absent (\(\phi_{\mathrm{diff}} \equiv 0\)). 
In this case, \(\phi = \phi_{\mathrm{term}}\). 

At the optimal point, \(\phi_{\mathrm{term}}(z^*) = 0\), and the subdifferential 
\(\partial\phi_{\mathrm{term}}(z^*)\) is the convex hull of the gradients of the 
active constraints. The classical Mangasarian--Fromovitz condition (MFCQ) 
is equivalent to \(0 \notin \partial\phi_{\mathrm{term}}(z^*)\) by Gordan's theorem, 
which implies that the distance from the origin to \(\partial\phi_{\mathrm{term}}(z^*)\) 
is positive: \(d := \operatorname{dist}(0,\partial\phi_{\mathrm{term}}(z^*)) > 0\).

Since $\phi_{\mathrm{term}}$ is the maximum of a finite collection of smooth 
functions, its subdifferential mapping is upper semicontinuous. Choose \(\varepsilon = d/2\). Then there exists a 
neighborhood \(B\) of \(z^*\) such that for all \(z \in B\),
\[
\partial\phi_{\mathrm{term}}(z) \subset \partial\phi_{\mathrm{term}}(z^*) + \varepsilon B_{L_2},
\]
where \(B_{L_2}\) is the unit ball in \(L_2\). For any \(v \in \partial\phi_{\mathrm{term}}(z)\), 
there exists \(v_0 \in \partial\phi_{\mathrm{term}}(z^*)\) with \(\|v - v_0\| \le \varepsilon\). 
Since \(\|v_0\| \ge d\), we obtain
\[
\|v\| \ge \|v_0\| - \|v - v_0\| \ge d - \varepsilon = \frac{d}{2}.
\]
Hence, \(\operatorname{dist}(0,\partial\phi_{\mathrm{term}}(z)) \ge d/2\) for all \(z \in B\). 
In particular, this holds for all \(z \in B\) with \(\phi_{\mathrm{term}}(z) > 0\), 
which is exactly the condition required in Section~\ref{sec:exactness} with 
\(a = d/2\). Thus, MFCQ ensures the uniform distance condition 
required for  
exactness.

The connection between MFCQ and \(0 \notin \partial\phi_{\mathrm{term}}(z^*)\) is 
established by Gordan's theorem.

\begin{theorem}[Gordan's Theorem \cite{Gordan1873}]\label{thm:gordan}
    For a finite set of vectors $$\{a_1,\dots,a_m\} \subset \mathbb{R}^n,$$ 
    exactly one of the following alternatives holds:
    \begin{enumerate}
        \item There exists a vector \(d \in \mathbb{R}^n\) such that 
              \(\langle a_i, d \rangle < 0\) for all \(i = 1,\dots,m\).
        \item There exist numbers \(\lambda_i \ge 0\), not all zero, such that 
              \(\sum\limits_{i=1}^m \lambda_i a_i = 0\).
    \end{enumerate}
\end{theorem}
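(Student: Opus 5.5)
We prove Gordan's theorem (Theorem~\ref{thm:gordan}) in two steps. First we show that the two alternatives cannot hold together. Then we show that if the second fails, the first must hold. The tool is separation of a compact convex set from the origin in $\mathbb{R}^n$, the finite-dimensional version of the separation argument already used in Section~\ref{sec:exactness}.

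\emph{Mutual exclusion.} Suppose there are $d$ with $\langle a_i,d\rangle<0$ for all $i$, and $\lambda_i\ge 0$, not all zero, with $\sum_i\lambda_i a_i=0$. Pairing the sum with $d$ gives
\[
0=\Big\langle \sum_{i=1}^m\lambda_i a_i,\,d\Big\rangle=\sum_{i=1}^m \lambda_i\langle a_i,d\rangle .
\]
Every term is $\le 0$, and the term with $\lambda_i>0$ is strictly negative. So the sum is strictly negative, which is a contradiction.

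\emph{At least one holds.} Assume the second alternative fails. Dividing by $\sum_i\lambda_i>0$ shows that this failure is equivalent to $0\notin C:=\operatorname{co}\{a_1,\dots,a_m\}$. The set $C$ is convex and compact, being the image of the standard simplex under a linear map. Hence there is a point $c^*\in C$ of minimal norm, and $c^*\neq 0$.

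We show that $d:=-c^*$ works. For any $c\in C$ and $t\in[0,1]$, the point $c^*+t(c-c^*)$ lies in $C$. Comparing its squared norm with $\|c^*\|^2$ and letting $t\downarrow 0$ gives the variational inequality
\[
\langle c^*,c-c^*\rangle\ge 0, \qquad\text{that is,}\qquad \langle c,c^*\rangle\ge\|c^*\|^2>0 .
\]
Taking $c=a_i$ yields $\langle a_i,d\rangle\le-\|c^*\|^2<0$ for every $i$. This is the first alternative.

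The only step needing care is the minimal-norm projection argument, namely the existence of $c^*$ and the variational inequality. Existence follows from compactness of $C$ and continuity of the norm. The inequality is the first-order condition of the convex function $t\mapsto\|c^*+t(c-c^*)\|^2$ on $[0,1]$, minimized at $t=0$, so its right derivative there, $2\langle c^*,c-c^*\rangle$, is $\ge 0$.

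The same construction shows something used in the surrounding text. Under MFCQ, i.e.\ the first alternative for the active gradients, we get $\operatorname{dist}(0,\partial\phi_{\mathrm{term}}(z^*))=\|c^*\|>0$. This is exactly the quantity $d$ used above.
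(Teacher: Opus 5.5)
Your proof is correct and complete. The pairing argument rules out both alternatives holding at once. The normalization $\lambda\mapsto\lambda/\sum_i\lambda_i$ correctly identifies failure of the second alternative with $0\notin C=\operatorname{co}\{a_1,\dots,a_m\}$ (for $m\ge 1$; the case $m=0$ is trivial). The minimal-norm point $c^*$, together with the variational inequality $\langle c,c^*\rangle\ge\|c^*\|^2$, gives the strict inequalities for $d=-c^*$.

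The paper gives no proof of this statement. It is quoted from Gordan's 1873 note and used as a black box in the proof of Theorem~\ref{thm:regularity_ineq}, so there is no argument in the text to compare with. Your nearest-point argument is the standard separation proof; other common routes go through Farkas' lemma or linear programming duality. It also fits the paper's toolkit, being the finite-dimensional version of the separation step in Section~\ref{sec:exactness}.

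Compared with the bare citation, your route supplies two things the paper actually uses. First, it makes explicit the normalization that the proof of Theorem~\ref{thm:regularity_ineq} uses silently when it passes between ``nonnegative coefficients, not all zero'' and ``$0\notin\partial\phi_{\mathrm{term}}(x^*)$''. Second, it is quantitative: the MFCQ direction $-c^*$ has margin $\langle a_i,-c^*\rangle\le-\operatorname{dist}(0,C)^2$, which ties MFCQ directly to the positive distance needed for USC.

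One minor point on your closing aside. The identity $\operatorname{dist}(0,\partial\phi_{\mathrm{term}}(z^*))=\|c^*\|$ holds when $\partial\phi_{\mathrm{term}}(z^*)$ is read as a set of vectors in $\mathbb{R}^n$, as in the paper's displayed formula. The paper's use of $B_{L_2}$ suggests viewing it in $L_2[t_0,T]$, where the gradients with respect to $z$ are constant functions. In that reading the distance picks up a factor $\sqrt{T-t_0}$, which does not affect positivity.
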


\begin{theorem}[MFCQ via subdifferential]
    \label{thm:regularity_ineq}
    Let $\Phi^k$, $k \in I$, be continuously differentiable functions, and let 
    $(x^*,u^*)$ be an optimal pair with $\Phi^k(x^*(T),T) = 0$ for all $k \in I$. 
    Consider the terminal part of the penalty function 
    \(\phi_{\mathrm{term}}(x) = \displaystyle\max_{k\in I} \Phi^k(x(T),T)\).
    Then the following conditions are equivalent:
    \begin{enumerate}
        \item[(i)] The Mangasarian–Fromovitz constraint qualification holds at $x^*$:
              there exists a vector $d$ such that $\langle \nabla\Phi^k(x^*(T),T), d \rangle < 0$ 
              for all $k \in I$.
        \item[(ii)] $0 \notin \partial\phi_{\mathrm{term}}(x^*)$.
    \end{enumerate}
\end{theorem}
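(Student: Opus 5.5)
The plan is to identify $\partial\phi_{\mathrm{term}}(x^*)$ explicitly and then apply Gordan's Theorem~\ref{thm:gordan} to the finite family of active gradients $a_k := \nabla\Phi^k(x^*(T),T)$, $k\in I$.

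The first step is to compute the subdifferential at the optimum. Since $\Phi^k(x^*(T),T)=0$ for every $k\in I$, all constraints attain the maximum, so $I_{\mathrm{act}}=I$ and $\phi_{\mathrm{term}}(x^*)=0$. For a pointwise maximum of finitely many continuously differentiable functions, the standard max-rule (as recalled in Section~\ref{sec:exactness}) gives
\[
\partial\phi_{\mathrm{term}}(x^*) = \operatorname{co}\{a_k \mid k\in I\}.
\]
Strictly, one composes with the affine map $z\mapsto x_0+\int_{t_0}^T z\,dt$. Each gradient in $z$-space is then the constant function $t\mapsto a_k$, and the convex hull is the image of a convex hull in $\mathbb{R}^n$ under an injective linear map. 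So $0$ lies in the hull in $L_2$ if and only if $0$ lies in the hull in $\mathbb{R}^n$. I will note this reduction once and then work in $\mathbb{R}^n$. The inequality case here has no $|\cdot|$ terms, so no sign issues arise.

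The second step is the reformulation of (ii). The statement $0\in\operatorname{co}\{a_k\}$ means there exist $\lambda_k\ge0$ with $\sum\lambda_k=1$ and $\sum\lambda_k a_k=0$. By normalization, this is equivalent to the existence of $\lambda_k\ge0$, not all zero, with $\sum\lambda_k a_k=0$: divide by $\sum\lambda_k>0$ in one direction, and note the other direction is immediate. Hence (ii) is exactly the negation of alternative 2 of Theorem~\ref{thm:gordan}.

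The final step invokes Gordan. By Theorem~\ref{thm:gordan}, exactly one of the two alternatives holds. Therefore the negation of alternative 2 is equivalent to alternative 1, namely the existence of $d$ with $\langle a_k,d\rangle<0$ for all $k\in I$, which is (i). Both implications follow at once.

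The only real obstacle is the first step: justifying the subdifferential formula at a point where all constraints are active, and confirming that the identification between $L_2$ and $\mathbb{R}^n$ preserves membership of the origin. The argument is otherwise a direct rewriting. For the first point I will rely on the standard formula for the subdifferential of a max-function, which yields $\phi_{\mathrm{term}}'(x^*;h)=\max_k\langle a_k,h\rangle$; this also makes the equivalence of (i) with $\phi_{\mathrm{term}}'(x^*;d)<0$ transparent.
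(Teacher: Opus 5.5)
Your proposal is correct and follows the same route as the paper: you identify $\partial\phi_{\mathrm{term}}$ at the optimum as the convex hull of the active gradients $\nabla\Phi^k(x^*(T),T)$, $k\in I$, and then read off the equivalence from Gordan's theorem. Your explicit normalization step (a convex combination versus nonnegative, not-all-zero coefficients) and your remark about embedding constant functions into $L_2$ fill in details the paper leaves implicit, especially in its converse direction.
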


\begin{proof}
The proof follows directly from Gordan's theorem (Theorem \ref{thm:gordan}). 
The subdifferential of $\phi_{\mathrm{term}}$ at $(x^*,u^*)$ is given by
\[
\partial\phi_{\mathrm{term}}(x^*,u^*) = \left\{ \sum_{k\in I} \beta_k \nabla\Phi^k(x^*(T),T) \;\middle|\; \beta_k \in [0,1],\; \sum_{k\in I} \beta_k = 1 \right\}.
\]

If MFCQ holds (condition (i)), then by Gordan's theorem there cannot exist 
nonnegative coefficients, not all zero, giving a zero combination. In particular, 
there is no combination with coefficients in \([0,1]\) satisfying \(\sum\limits_{k\in I}\beta_k = 1\) 
that yields zero. Hence, \(0 \notin \partial\phi_{\mathrm{term}}(x^*,u^*)\), so (ii) holds.

Conversely, if (ii) holds, then by Gordan's theorem, this implies the 
existence of a vector \(d\) with $$\langle \nabla\Phi^k(x^*(T),T), d \rangle < 0 \ \forall k\in I,$$
which is precisely MFCQ. Thus, (i) and (ii) are equivalent.
\end{proof}

\subsubsection{Mixed Constraints}

We now consider the situation where both equality and inequality constraints 
are present and the differential constraint is absent (\(\phi_{\mathrm{diff}} \equiv 0\)). 
In this case, \(\phi = \phi_{\mathrm{term}}\).

\emph{Classical regularity conditions.} 
Assume that the following hold in a neighborhood of the admissible set:
\begin{itemize}
    \item The gradients $\{\nabla\Phi^k(x(T),T)\}_{k\in E}$ are linearly independent.
    \item There exists a direction $d$ such that $\langle \nabla\Phi^k, d \rangle = 0$ 
          for all $k\in E$ and $\langle \nabla\Phi^j, d \rangle < 0$ for all $j\in I$.
\end{itemize}
These are the standard conditions for mixed equality--inequality constraints 
in smooth optimization (see, e.g., \cite{Mangasarian1967}).

\emph{Implication for USC.}
Take any point with \(\phi_{\mathrm{term}}(z) > 0\) sufficiently close to the 
optimum. Let \(E_{\mathrm{act}}(z)\) and \(I_{\mathrm{act}}(z)\) be the sets of active 
equality and inequality constraints at that point. By continuity and the 
classical regularity conditions, for all such points sufficiently close to 
the optimum we have:
\begin{itemize}
    \item The gradients of active equality constraints are linearly independent.
    \item The signs of the active equality constraints are fixed (as they 
          cannot vanish in a neighborhood of the optimum without violating 
          linear independence).
    \item There exists a direction (the same \(d\) as above) along which the 
          active inequalities strictly decrease.
\end{itemize}
Consequently, the subdifferential \(\partial\phi_{\mathrm{term}}(z)\) is the convex 
hull of a finite set of vectors that does not contain the origin. Moreover, 
as in the proof of Lemma~\ref{lem:licq_to_usc}, a uniform positive lower 
bound on the distance from the origin to \(\partial\phi_{\mathrm{term}}(z)\) exists 
in a sufficiently small neighborhood of the optimum. Hence, USC holds.

The regularity conditions introduced above are stated for a fixed terminal 
time \(T\). In the case of free terminal time (see Sections~\ref{sec:free_time} 
and \ref{sec:moving_manifold}), the same conditions apply to the optimal 
terminal time \(T^*\), with the gradients taken with respect to the spatial 
variables only; the additional transversality condition involving time 
derivatives will be derived separately.

\subsubsection*{Illustrative example: Nonsmooth terminal constraint}
In this example we relax the smoothness assumption on the terminal 
constraints to illustrate the generality of the USC framework. Consider a one-dimensional optimal control problem with terminal equality constraint
\[
\Phi(x(T)) = |x(T)| - 2 = 0,
\]
i.e., the terminal state must satisfy \(x(T) = 2\) or \(x(T) = -2\). 
The function \(\Phi\) is not differentiable at the admissible points 
\(x(T) = \pm 2\). Consequently, the classical linear independence 
constraint qualification (LICQ) is not defined.

In contrast, the Unified Separation Condition (USC) applies without 
difficulty. The terminal penalty function is
\[
\phi_{\mathrm{term}}(x,u) = \bigl| |x(T)| - 2 \bigr|.
\]
Consider a neighborhood of the admissible point \(x(T) = 2\) (the 
analysis for \(x(T) = -2\) is analogous). For points with 
\(\phi_{\mathrm{term}} > 0\) in this neighborhood, we have
\[
\phi_{\mathrm{term}}(x,u) = |x(T) - 2|.
\]
Its subdifferential is
\[
\partial\phi_{\mathrm{term}}(x,u) = 
\begin{cases}
\{-1\}, & x(T) < 2,\\
[-1, 1], & x(T) = 2,\\
\{1\}, & x(T) > 2.
\end{cases}
\]
For \(\phi_{\mathrm{term}} > 0\) (i.e., \(x(T) \neq 2\)), the subdifferential 
is either \(\{-1\}\) or \(\{1\}\), and in both cases 
\(\operatorname{dist}(0,\partial\phi_{\mathrm{term}}) = 1\). The same 
calculation holds for the admissible point \(x(T) = -2\). Hence, the 
Unified Separation Condition holds with \(a = 1\).

This example illustrates how the exact penalty framework naturally 
extends to problems where classical regularity conditions are not 
defined.

\paragraph{Advantages of the unified approach}
In classical optimal control theory, different regularity conditions are required 
depending on the nature of the constraints: linear independence of gradients 
(LICQ) for equality constraints and the Mangasarian--Fromovitz condition (MFCQ) 
for inequalities. Moreover, the treatment of mixed equality--inequality constraints 
often involves additional technical assumptions. In contrast, the Unified Separation Condition (USC) 
provides a single, unified regularity requirement that applies to all cases 
without distinction. This not only simplifies the theoretical analysis but 
also offers a consistent framework for numerical verification, since the 
subdifferential \(\partial\phi\) can be computed or approximated in a unified manner 
regardless of the type of constraints present.

\section{Preliminary Results: The Maximum Principle}
\label{sec:maxprinciple}

For problem \eqref{eq:cost}--\eqref{eq:end_constraints}, Pontryagin's maximum 
principle (see \cite{Pontryagin1983, Dem2005} for detailed derivations, with 
\cite{Dem2005} providing a derivation via the exact penalty apparatus) states 
that if \((x^*,u^*)\) is an optimal pair, then there exists an adjoint variable 
\(\psi(t)\) satisfying
\begin{equation*}
    \dot{\psi}(t) = - \left(\frac{\partial f}{\partial x}(x^*,u^*,t)\right)^T \psi(t),
    \label{eq:adjoint_general}
\end{equation*}
such that for almost every \(t \in [t_0, T]\) the Hamiltonian
$$H(x(t),u(t),\psi(t),t) = \langle \psi(t), f(x(t),u(t),t) \rangle$$ 
attains its maximum:
\begin{equation*}
    H(x^*(t), u^*(t), \psi(t), t) = \max_{v \in U} H(x^*(t), v, \psi(t), t).
    \label{eq:maximum_principle}
\end{equation*}

\section{Derivation of Transversality Conditions for Fixed Time}
\label{sec:transversality_fixed}

In this section we show that transversality conditions can be obtained directly 
from the analysis of the variation of the penalty functional \(F_\lambda\) defined 
by \eqref{eq:penalty_function} with \(\mathcal{J}\) given by \eqref{eq:cost_z} and 
\(\phi\) given by \eqref{eq:phi_decomposition} using classical variations, 
assuming that the terminal time \(T\) is fixed and the USC condition holds. Under this condition, for sufficiently large 
\(\lambda\) the penalty function \(F_\lambda\) is exact by 
Theorem~\ref{thm:exact_penalty_sufficient}, and the optimal pair \((x^*,u^*)\) 
is a local minimizer of \(F_\lambda\).

\subsection{Classical Variation}

Since in the penalty formulation the state \(x(\cdot)\) and control \(u(\cdot)\) 
are independent variables (the differential constraint is incorporated into the 
penalty term rather than imposed as an equality), we may vary them independently. 
Consider the classical (weak) variations
\begin{equation*}
    x_\epsilon(t) = x^*(t) + \epsilon \xi(t), \qquad 
    u_\epsilon(t) = u^*(t) + \epsilon v(t),
    \label{eq:classical_variation}
\end{equation*}
where \(\epsilon > 0\) and \(\xi(t) \in P_n^1[0,T]\), \(v(t) \in P_m[0,T]\) are 
arbitrary piecewise continuous functions with \(\xi(t_0) = 0\) (to satisfy the 
fixed initial condition). Note that \(\xi(t)\) and \(v(t)\) are chosen independently; 
there is no requirement that \(\xi\) satisfy any linearized dynamics, because the 
state is no longer slaved to the control in the penalized formulation.

For sufficiently large \(\lambda\), the penalty function \(F_\lambda\) is exact, 
meaning that solutions of the original constrained problem coincide with 
unconstrained minimizers of \(F_\lambda\). 
Consequently, \((x^*,u^*)\) is a local minimizer of \(F_\lambda\), and for any 
admissible variation we must have
\begin{equation*}
    \delta F_\lambda \ge 0,
    \label{eq:necessary_inequality}
\end{equation*}
where \(\delta F_\lambda\) denotes the first variation of \(F_\lambda\) at \((x^*,u^*)\).

\subsection{Variation of the Penalty Functional}

We now compute the variation of each term in \(F_\lambda\) defined by 
\eqref{eq:penalty_function} with \(\phi\) given by \eqref{eq:phi_decomposition}. 
Recall that \(\phi_{\mathrm{diff}}(x,u) = \| \dot{x} - f(x,u,t) \|_{L_2}\) and 
$$\phi_{\mathrm{term}}(x,u) = \max_{k\in E,\; j\in I}\big\{ |\Phi^k(x(T),T)|, \; \Phi^j(x(T),T)\big\}.$$
\noindent\textit{Terminal cost:}
\begin{equation*}
    \delta \Phi^0 = \left\langle \frac{\partial \Phi^0(x^*(T))}{\partial x}, \xi(T) \right\rangle.
    \label{eq:var_terminal}
\end{equation*}

\noindent\textit{Penalty for differential constraint:}
At the optimal point, \(\dot{x}^* - f(x^*,u^*,t) = 0\), so \(\phi_{\mathrm{diff}} = 0\). 
The subdifferential of the norm at zero is the unit ball: 
\(\partial \|\cdot\|(0) = \{ s \mid \|s\|_{L_2} \le 1 \}\). 
Therefore, the first variation of \(\phi_{\mathrm{diff}}\) is given by
\begin{equation*}
    \delta \phi_{\mathrm{diff}} = \max_{\|s(\cdot)\|_{L_2} \le 1} \int_{t_0}^T \langle s(t), \dot{\xi}(t) - f_x^*(t) \xi(t) - f_u^*(t) v(t) \rangle \,dt,
    \label{eq:var_penalty_state}
\end{equation*}
where \(f^*(t) = f(x^*(t),u^*(t),t)\). This follows from the fact that for any 
convex Lipschitz function \(\varphi\) with \(\varphi(\zeta^*) = 0\), the first variation is 
\(\delta \varphi = \displaystyle\max_{s \in \partial \varphi(\zeta^*)} \langle s, \delta \zeta \rangle\).

\noindent\textit{Penalty for terminal constraints:}
At the optimal point, for equality constraints \(\Phi^k = 0\) and for active 
inequality constraints also \(\Phi^k = 0\). The subdifferential of the maximum 
function at a point where several terms attain the maximum is the convex hull 
of the subdifferentials of the active terms. Using the subdifferentials \(\partial |\cdot|(0) = [-1,1]\) and 
\(\partial [\cdot]_+(0) = [0,1]\), together with the chain rule, we obtain
\[
\partial |\Phi^k|(x) = [-1,1] \cdot \nabla\Phi^k(x), \qquad
\partial [\Phi^k]_+(x) = [0,1] \cdot \nabla\Phi^k(x) \quad \text{when } \Phi^k(x)=0,
\]
and therefore
\begin{align}
    \delta |\Phi^k| &= \max_{\alpha_k \in [-1,1]} \alpha_k \left\langle \frac{\partial \Phi^k}{\partial x}, \xi(T) \right\rangle, \quad k \in E,\\
    \delta [\Phi^k]_+ &= \max_{\beta_k \in [0,1]} \beta_k \left\langle \frac{\partial \Phi^k}{\partial x}, \xi(T) \right\rangle, \quad k \in I.
\end{align}
Since \(\phi_{\mathrm{term}}\) is the maximum over all these terms, its first 
variation is the maximum over the convex hull of the variations of the active 
terms. Consequently,
\begin{equation*}
    \delta \phi_{\mathrm{term}} = \max_{\substack{\alpha_k \in [-1,1],\; k\in E \\ \beta_k \in [0,1],\; k\in I}} \left\langle \sum_{k\in E} \alpha_k \frac{\partial \Phi^k}{\partial x} + \sum_{k\in I} \beta_k \frac{\partial \Phi^k}{\partial x}, \xi(T) \right\rangle.
    \label{eq:var_phi_term}
\end{equation*}

\noindent\textit{Total variation:}
Since \(F_\lambda = \Phi^0 + \lambda \phi_{\mathrm{diff}} + \lambda \phi_{\mathrm{term}}\), we obtain
\begin{align}
    \delta F_\lambda &= \left\langle \frac{\partial \Phi^0}{\partial x}\bigg|_{t=T}, \xi(T) \right\rangle \nonumber \\
    &\quad + \lambda \max_{\|s(\cdot)\|_{L_2} \le 1} \int_{t_0}^T \langle s(t), \dot{\xi}(t) - f_x^*(t) \xi(t) - f_u^*(t) v(t) \rangle \,dt \nonumber \\
    &\quad + \lambda \max_{\substack{\alpha_k \in [-1,1],\; k\in E \\ \beta_k \in [0,1],\; k\in I}} \left\langle \sum_{k\in E} \alpha_k \frac{\partial \Phi^k}{\partial x} + \sum_{k\in I} \beta_k \frac{\partial \Phi^k}{\partial x}, \xi(T) \right\rangle.
    \label{eq:var_total}
\end{align}

\subsection{Necessary Conditions}

Since the penalty function \(F_\lambda\) is exact for sufficiently large \(\lambda\), 
the original constrained problem is equivalent to the unconstrained 
minimization of \(F_\lambda\) over all admissible \((x,u)\). In this unconstrained 
formulation, the variations \(\xi(\cdot)\) and \(v(\cdot)\) can be chosen 
arbitrarily and independently (subject only to \(\xi(t_0)=0\)). 

From the expression \eqref{eq:var_total} for the total variation, we observe that 
\(\delta F_\lambda\) can be written as a single maximum over all variables 
\(s(\cdot) \in L_2[t_0,T]\) with \(\|s\|_{L_2} \le 1\), \(\alpha_k \in [-1,1]\), 
\(k\in E\), and \(\beta_k \in [0,1]\), \(k\in I\):
\begin{multline}
    \delta F_\lambda = \max_{\substack{\|s\|_{L_2}\le 1 \\ \alpha_k\in[-1,1] \\ \beta_k\in[0,1]}} \Bigg[ \int_{t_0}^T \langle s(t), \dot{\xi}(t) - f_x^*(t) \xi(t) - f_u^*(t) v(t) \rangle \,dt \\
    + \left\langle \frac{\partial \Phi^0}{\partial x}\bigg|_{t=T} + \lambda \bigg( \sum_{k\in E} \alpha_k \frac{\partial \Phi^k}{\partial x}\bigg|_{t=T} + \sum_{k\in I} \beta_k \frac{\partial \Phi^k}{\partial x}\bigg|_{t=T} \bigg), \xi(T) \right\rangle \Bigg].
    \label{eq:delta_F_as_max}
\end{multline}

Integrating by parts the term containing \(\dot{\xi}\) and using 
\(\xi(t_0)=0\) gives
\begin{equation*}
    \int_{t_0}^T \langle s(t), \dot{\xi}(t) \rangle \,dt = \langle s(T), \xi(T) \rangle - \int_{t_0}^T \langle \dot{s}(t), \xi(t) \rangle \,dt.
    \label{eq:integration_by_parts}
\end{equation*}

Substituting this into \eqref{eq:delta_F_as_max} and rearranging yields
\begin{multline}
    \delta F_\lambda = \max_{\substack{\|s\|_{L_2}\le 1 \\ \alpha_k\in[-1,1] \\ \beta_k\in[0,1]}} \Bigg[ -\int_{t_0}^T \langle \dot{s}(t) + (f_x^*(t))^T s(t), \xi(t) \rangle \,dt \\
    + \Bigg\langle s(T) + \frac{\partial \Phi^0}{\partial x}\bigg|_{t=T} + \lambda \bigg( \sum_{k\in E} \alpha_k \frac{\partial \Phi^k}{\partial x}\bigg|_{t=T} + \sum_{k\in I} \beta_k \frac{\partial \Phi^k}{\partial x}\bigg|_{t=T} \bigg), \xi(T) \Bigg\rangle \\
    - \int_{t_0}^T \langle s(t), f_u^*(t) v(t) \rangle \,dt \Bigg].
    \label{eq:delta_F_after_parts}
\end{multline}

The necessary condition \(\delta F_\lambda \ge 0\) must hold for all independent 
variations \(\xi(\cdot)\) and \(v(\cdot)\). To analyze this condition, we consider 
two families of variations separately.

\paragraph{Variations with \(\xi(T) = 0\)}
For variations with \(\xi(T)=0\), the terms involving \(\alpha_k\) and \(\beta_k\) vanish, and the expression simplifies to
\begin{equation*}
    \delta F_\lambda = \max_{\|s\|_{L_2}\le 1} \left[ -\int_{t_0}^T \langle \dot{s}(t) + (f_x^*(t))^T s(t), \xi(t) \rangle \,dt - \int_{t_0}^T \langle s(t), f_u^*(t) v(t) \rangle \,dt \right].
\end{equation*}

Since \(\xi(t)\) and \(v(t)\) can be chosen arbitrarily 
independently, the necessary condition \(\delta F_\lambda \ge 0\) for all such 
variations implies that the maximizing \(s^*\) must satisfy, for almost every 
\(t \in [t_0, T]\),
\begin{equation}
    \dot{s}^*(t) + (f_x^*(t))^T s^*(t) = 0,
    \label{eq:adjoint_s}
\end{equation}
and
\begin{equation}
    (f_u^*(t))^T s^*(t) = 0.
    \label{eq:optimality_u}
\end{equation}
Equation \eqref{eq:adjoint_s} is the adjoint equation; it guarantees that  
\(\dot{s}^*\) is bounded (in fact, continuous on each interval of 
continuity of \(f_x^*\)). Equation 
\eqref{eq:optimality_u} is the optimality condition for the control. For 
problems with control constraints, a more careful analysis using needle 
variations (see \cite{Dem2005}) yields the maximum principle 
\eqref{eq:maximum_principle}.

\paragraph{Variations isolating the boundary term}
To isolate the boundary term, consider a family of variations of the form
\(\xi(t) = \xi_0(t) + \varepsilon\eta(t)\), where \(\xi_0\) is an arbitrary 
variation from the previous class (satisfying \(\xi_0(T)=0\)) and 
 $\eta(t)\in P^1[t_0,T]$ has the form
$$
\eta(t)=
\begin{cases}
0,& \text{ if } t\in [t_0,T-\varepsilon],\\
1+(t-T)/\varepsilon,& \text{ if } t\in [T-\varepsilon,T]
\end{cases}
$$

For such variations, we have \(\xi(T) = \varepsilon\). Substituting into 
\eqref{eq:delta_F_after_parts} and using the fact that the contribution 
from \(\xi_0\) vanishes due to the adjoint equation \eqref{eq:adjoint_s} 
(as established from the previous class of variations), we are left with 
the contribution from the perturbation \(\varepsilon\eta(t)\). Since 
\(\eta(t)\) is supported on an interval of length \(\varepsilon\) and is 
bounded, the integral term involving \(\xi(t)\) is of order \(O(\varepsilon^2)\). 
Hence, for the optimal triple \((s^*,\alpha^*,\beta^*)\)
\begin{multline*}
    \delta F_\lambda = \varepsilon \Bigg\langle s^*(T) + \frac{\partial \Phi^0}{\partial x}\bigg|_{t=T} \\
    + \lambda \bigg( \sum_{k\in E} \alpha_k^* \frac{\partial \Phi^k}{\partial x}\bigg|_{t=T} + \sum_{k\in I} \beta_k^* \frac{\partial \Phi^k}{\partial x}\bigg|_{t=T} \bigg), \eta(T) \Bigg\rangle + o(\varepsilon).
\end{multline*}
Since \(\eta(T)=1\), the necessary condition \(\delta F_\lambda \ge 0\) for both 
\(\varepsilon > 0\) and \(\varepsilon < 0\) forces the coefficient to vanish. Hence,
\begin{equation}
    s^*(T) + \frac{\partial \Phi^0}{\partial x}\bigg|_{t=T} + \lambda \left( \sum_{k\in E} \alpha_k^* \frac{\partial \Phi^k}{\partial x}\bigg|_{t=T} + \sum_{k\in I} \beta_k^* \frac{\partial \Phi^k}{\partial x}\bigg|_{t=T} \right) = 0.
    \label{eq:transversality_raw}
\end{equation}

Equivalently,
\begin{equation}
    s^*(T) = - \frac{\partial \Phi^0}{\partial x}\bigg|_{t=T} - \lambda \left( \sum_{k\in E} \alpha_k^* \frac{\partial \Phi^k}{\partial x}\bigg|_{t=T} + \sum_{k\in I} \beta_k^* \frac{\partial \Phi^k}{\partial x}\bigg|_{t=T} \right).
    \label{eq:transversality_raw_alt}
\end{equation}

Thus, we obtain the adjoint equation \eqref{eq:adjoint_s}, the control 
optimality condition \eqref{eq:optimality_u} (or the maximum principle), 
and the transversality condition \eqref{eq:transversality_raw}. The 
parameters \(\alpha_k^*\) and \(\beta_k^*\) will be identified as the Lagrange 
multipliers associated with the terminal constraints in the classical 
formulation.

\subsection{Transversality Conditions for Fixed Time}

Introduce the notation
\begin{equation*}
    \psi(t) = \lambda s^*(t).
    \label{eq:psi_def}
\end{equation*}

Equation \eqref{eq:adjoint_s} then becomes the classical adjoint equation
\begin{equation}
    \dot{\psi}(t) = - (f_x(x^*(t), u^*(t), t))^T \psi(t).
    \label{eq:adjoint}
\end{equation}

For equality constraints, set \(\nu_k = \lambda \alpha_k^*\) (note that 
\(\nu_k \in [-\lambda, \lambda]\)), and for inequality constraints, set 
\(\mu_k = \lambda \beta_k^*\) (with \(\mu_k \in [0, \lambda]\)). For an exact 
penalty function with sufficiently large \(\lambda\), these coefficients can 
take any values within the specified ranges. Condition \eqref{eq:transversality_raw} 
then takes the form
\begin{equation}
    \psi(T) = - \frac{\partial \Phi^0}{\partial x}\bigg|_{t=T} - \sum_{k\in E} \nu_k \frac{\partial \Phi^k}{\partial x}\bigg|_{t=T} - \sum_{k\in I} \mu_k \frac{\partial \Phi^k}{\partial x}\bigg|_{t=T}.
    \label{eq:transversality}
\end{equation}

This is the classical \textbf{transversality condition} for problems with 
equality and inequality constraints at the right endpoint for fixed terminal time. 
For equality constraints, the multipliers \(\nu_k\) can have any sign, while for 
inequality constraints, \(\mu_k \ge 0\), in full agreement with classical theory.

\begin{remark}
    In the case of a free right endpoint (\(p = 0\)), condition \eqref{eq:transversality} 
    simplifies to
    \begin{equation}
        \psi(T) = - \frac{\partial \Phi^0}{\partial x}\bigg|_{t=T},
        \label{eq:transversality_free}
    \end{equation}
    which corresponds to the condition for the Mayer problem with free endpoint.
\end{remark}

\section{Generalizations: Free Terminal Time and Moving Endpoints}
\label{sec:generalizations}

In this section we extend the results obtained above to the cases of free 
terminal time \(T\) and moving right endpoint (time-dependent manifold). In all 
cases, the key regularity condition remains the uniform boundedness of the 
distance from the origin to the subdifferential of the penalty function, i.e., 
\eqref{ass:regularity}.

\subsection{Free Terminal Time (Time-Independent Terminal Con\-stra\-ints)}
\label{sec:free_time}

Consider now the case where the terminal time \(T\) is not fixed but subject to 
optimization, while the terminal constraints do not depend explicitly on time:
\begin{equation*}
    \Phi^k(x(T)) = 0, \quad k \in E, \qquad \Phi^k(x(T)) \le 0, \quad k \in I.
    \label{eq:constraints_time_indep}
\end{equation*}

The penalty functional now depends explicitly on \(T\) as a variable. 
Recalling the definition of the penalty function from Section~\ref{sec:penalty}, 
we have
\begin{equation}
    F_\lambda(x,u,T) = \Phi^0(x(T)) + \lambda \phi_{\mathrm{diff}}(x,u,T) + \lambda \phi_{\mathrm{term}}(x,T),
    \label{eq:penalty_with_T}
\end{equation}
where
\begin{equation*}
\begin{split}
   & \phi_{\mathrm{diff}}(x,u,T) = \left\| \dot{x} - f(x,u,t) \right\|_{L_2[t_0,T]}, \\
    &\phi_{\mathrm{term}}(x,T) = \max_{k\in E,\; j\in I}\big\{ |\Phi^k(x(T))|, \; \Phi^j(x(T)) \big\}.
\end{split}    
\end{equation*}

For an optimal triple \((x^*,u^*,T^*)\), the exactness of the penalty function 
for sufficiently large \(\lambda\) (guaranteed by Theorem~\ref{thm:exact_penalty_sufficient} 
under the regularity condition \eqref{ass:regularity}) implies that 
\((x^*,u^*,T^*)\) is a local minimizer of \(F_\lambda\). We consider variations 
of the form
\begin{equation*}
    x_\varepsilon(t) = x^*(t) + \varepsilon \xi(t), \qquad 
    u_\varepsilon(t) = u^*(t) + \varepsilon v(t), \qquad 
    T_\varepsilon = T^* + \varepsilon \delta T,
\end{equation*}
where \(\xi(t_0)=0\) and \(\xi(\cdot)\), \(v(\cdot)\), \(\delta T\) are independent.

The variation of the endpoint is given by
\begin{equation}
    \delta x(T) = \xi(T) + \dot{x}^*(T) \delta T,
    \label{eq:variation_with_dT}
\end{equation}
which follows from the Taylor expansion 
$$x^*(T^*+\varepsilon\delta T) + \varepsilon\xi(T^*+\varepsilon\delta T) = x^*(T^*) + \varepsilon(\dot{x}^*(T^*)\delta T + \xi(T^*)) + o(\varepsilon).$$

To compute the variation of the penalty functional, we note that the terms 
\(\phi_{\mathrm{diff}}\) and \(\phi_{\mathrm{term}}\) depend on \(T\) both directly 
(through the upper limit of integration in \(\phi_{\mathrm{diff}}\) and through 
the argument of \(\Phi^k\) in \(\phi_{\mathrm{term}}\)) and indirectly through 
\(x(T)\). Using the same subdifferential calculus as in Section~\ref{sec:transversality_fixed}, 
and taking into account the additional variation \(\delta T\), we obtain after 
lengthy but straightforward calculations (see \cite{Moiseev1971} for the 
classical derivation) the following necessary conditions.

First, the adjoint equation and control optimality condition remain unchanged:
\begin{align}
    \dot{\psi}(t) &= - (f_x(x^*(t), u^*(t), t))^T \psi(t), \label{eq:adjoint_free_time}\\
    (f_u(x^*(t), u^*(t), t))^T \psi(t) &= 0 \quad \text{(or the maximum principle)}. \label{eq:optimality_free_time}
\end{align}

Second, the transversality condition at the right endpoint takes the same 
form as in the fixed-time case:
\begin{equation}
    \psi(T^*) = - \frac{\partial \Phi^0}{\partial x}\bigg|_{t=T^*} - \sum_{k\in E} \nu_k \frac{\partial \Phi^k}{\partial x}\bigg|_{t=T^*} - \sum_{k\in I} \mu_k \frac{\partial \Phi^k}{\partial x}\bigg|_{t=T^*},
    \label{eq:transversality_free_time}
\end{equation}
with \(\nu_k \in \mathbb{R}\), \(\mu_k \ge 0\) (the same multipliers as before).

Third, because the terminal time is free, we obtain an additional condition:
\begin{equation}
    H(x^*(T^*), u^*(T^*), \psi(T^*), T^*) = 0,
    \label{eq:hamiltonian_zero}
\end{equation}
where \(H(x,u,\psi,t) = \langle \psi, f(x,u,t) \rangle\) is the Hamiltonian. 
This condition reflects the fact that a variation \(\delta T\) changes the 
cost both directly (through the terminal cost \(\Phi^0\)) and indirectly 
(through the integral term in the penalty). When the penalty is exact, 
these contributions must balance, leading to the vanishing of the 
Hamiltonian at the terminal time.

\begin{remark}
For autonomous problems (where \(f\) and \(\Phi^0\) do not depend explicitly 
on \(t\)), the Hamiltonian is constant along the optimal trajectory, and 
condition \eqref{eq:hamiltonian_zero} implies \(H \equiv 0\) on \([t_0, T^*]\).
\end{remark}

\subsection{Moving Manifold (Time-Dependent Terminal Constraints)}
\label{sec:moving_manifold}

Consider now the case where the terminal constraints depend explicitly on time:
\begin{equation}
    \Phi^k(x(T), T) = 0, \quad k \in E, \qquad \Phi^k(x(T), T) \le 0, \quad k \in I.
    \label{eq:constraints_time_dep}
\end{equation}

This corresponds to a moving manifold at the right endpoint. The penalty 
function now has the form
\begin{equation*}
    \phi_{\mathrm{term}}(x,T) = \max_{k\in E,\; j\in I}\big\{ |\Phi^k(x(T),T)|, \; \Phi^j(x(T),T) \big\}.
\end{equation*}

Repeating the analysis of the previous subsection but now taking into account 
the explicit time dependence of the constraint functions, we obtain the 
following transversality conditions at the right endpoint:
\begin{align*}
    \psi(T^*) = - \frac{\partial \Phi^0}{\partial x}\bigg|_{t=T^*} - \sum_{k\in E} \tilde{\nu}_k \frac{\partial \Phi^k}{\partial x}\bigg|_{t=T^*} - \sum_{k\in I} \tilde{\mu}_k \frac{\partial \Phi^k}{\partial x}\bigg|_{t=T^*}, 
\end{align*}
\begin{align}
    \begin{split}
        H(x^*(T^*), u^*(T^*), \psi(T^*),T^*) + \frac{\partial \Phi^0}{\partial t}\bigg|_{t=T^*} + \sum_{k\in E} \tilde{\nu}_k \frac{\partial \Phi^k}{\partial t}\bigg|_{t=T^*} + \sum_{k\in I} \tilde{\mu}_k \frac{\partial \Phi^k}{\partial t}\bigg|_{t=T^*} = 0,
    \end{split} \label{eq:transversality_moving_t}
\end{align}
where \(\tilde{\nu}_k \in \mathbb{R}\) and \(\tilde{\mu}_k \ge 0\) are the Lagrange 
multipliers associated with the terminal constraints. Equation 
\eqref{eq:transversality_moving_t} is the additional condition that appears 
due to the free terminal time, now involving the time derivatives of the 
constraint functions because the terminal manifold moves.

\subsection{Free Left Endpoint}

By symmetry, the same analysis can be applied to the left endpoint. Suppose 
that the initial condition is not fixed but subject to constraints of the form
\begin{equation*}
    \chi^l(x(t_0), t_0) = 0,\; l \in E_L, \qquad \chi^l(x(t_0), t_0) \le 0,\; l \in I_L,
    \label{eq:left_constraints}
\end{equation*}
where \(E_L\) and \(I_L\) denote the index sets for equality and inequality 
constraints at the left endpoint, respectively. Repeating the arguments of 
Sections \ref{sec:transversality_fixed} and \ref{sec:free_time} with obvious 
modifications (replacing \(T\) by \(t_0\), reversing the direction of 
integration, and noting that variations at the left endpoint satisfy 
\(\xi(t_0) \neq 0\) while \(\xi(T)=0\)), we obtain the transversality condition 
at the initial time:
\begin{equation}
    \psi(t_0) = \sum_{l\in E_L} \gamma_l \frac{\partial \chi^l}{\partial x}\bigg|_{t=t_0} + \sum_{l\in I_L} \delta_l \frac{\partial \chi^l}{\partial x}\bigg|_{t=t_0},
    \label{eq:transversality_left}
\end{equation}
with the multipliers satisfying \(\gamma_l \in \mathbb{R}\) (unrestricted sign for 
equalities) and \(\delta_l \ge 0\) (nonnegativity for inequalities). If the 
initial time \(t_0\) is also free, an additional condition analogous to 
\eqref{eq:hamiltonian_zero} (for time-independent constraints) or to 
\eqref{eq:transversality_moving_t} (for time-dependent constraints) would 
appear, with the sign changed due to the direction of integration.

\subsection{Summary of Transversality Conditions}
\label{sec:summary}

For the reader's convenience, we collect here the transversality conditions 
derived above for the various cases. In all cases, \((x^*,u^*)\) denotes the 
optimal pair, \(\psi(t)\) is the adjoint variable satisfying 
\(\dot{\psi} = -f_x^T \psi\), and \(T\) is the terminal time (fixed or free).

\paragraph{Fixed terminal time:}
\begin{equation}
    \psi(T) = - \frac{\partial \Phi^0}{\partial x}\bigg|_{t=T} - \sum_{k\in E} \nu_k \frac{\partial \Phi^k}{\partial x}\bigg|_{t=T} - \sum_{k\in I} \mu_k \frac{\partial \Phi^k}{\partial x}\bigg|_{t=T},
\end{equation}
with \(\nu_k \in \mathbb{R}\), \(\mu_k \ge 0\). For a free right endpoint (\(p=0\)), 
this reduces to \(\psi(T) = -\partial\Phi^0/\partial x|_{t=T}\).

\paragraph{Free terminal time, time-independent constraints:}
In addition to the fixed-time transversality condition (with 
\(\partial\Phi^k/\partial t = 0\)), we have
\begin{equation}
    H(x^*(T), u^*(T), \psi(T)) = 0.
\end{equation}

\paragraph{Moving manifold (time-dependent constraints):}
\begin{align}
    \psi(T) &= - \frac{\partial \Phi^0}{\partial x}\bigg|_{t=T} - \sum_{k\in E} \tilde{\nu}_k \frac{\partial \Phi^k}{\partial x}\bigg|_{t=T} - \sum_{k\in I} \tilde{\mu}_k \frac{\partial \Phi^k}{\partial x}\bigg|_{t=T},\\
    H(x^*(T), u^*(T), \psi(T), T) &+ \frac{\partial \Phi^0}{\partial t}\bigg|_{t=T} + \sum_{k\in E} \tilde{\nu}_k \frac{\partial \Phi^k}{\partial t}\bigg|_{t=T} + \sum_{k\in I} \tilde{\mu}_k \frac{\partial \Phi^k}{\partial t}\bigg|_{t=T} = 0.
\end{align}

\paragraph{Free left endpoint (with fixed initial time):}
\begin{equation}
    \psi(t_0) = \sum_{l\in E_L} \gamma_l \frac{\partial \chi^l}{\partial x}\bigg|_{t=t_0} + \sum_{l\in I_L} \delta_l \frac{\partial \chi^l}{\partial x}\bigg|_{t=t_0},
\end{equation}
with \(\gamma_l \in \mathbb{R}\), \(\delta_l \ge 0\). If the initial time \(t_0\) is 
also free, additional conditions involving the Hamiltonian and time 
derivatives of \(\chi^l\) appear.

These conditions are classical and can be found in standard references 
\cite{Pontryagin1983, Moiseev1971}. The derivation presented here shows that, under the USC (which 
guarantees the existence of an exact penalty constant), all transversality 
conditions follow from the unconstrained minimization of the penalty 
functional \(F_\lambda = J + \lambda\phi\).

\section{Numerical Experiments: Time-Optimal Control of a Har\-mo\-nic Oscillator}
\label{sec:numerical}

In this section we demonstrate the theoretical results obtained above on a 
classical time-optimal control problem. The purpose of these experiments is 
twofold: first, to illustrate the applicability of the exact penalty approach; second, to provide an independent 
verification of the theoretical derivations by comparing the numerical results 
with the known analytical solution.

We consider a harmonic oscillator with the objective to transfer the system 
from the initial state to an arbitrary state with zero velocity in minimum time.

\subsection{Problem Statement}

We consider the problem of minimizing time:
\begin{equation*}
\begin{split}
    &T \longrightarrow \min\\
 \textit{s.t.}&   \begin{cases}
        \dot{x}_1 = x_2, \\
        \dot{x}_2 = -x_1 + u, \\
        |u| \le 1,\\
    x_1(0) = 2, \; x_2(0) = 0, \; x_2(T) = 0.
    \end{cases}
\end{split}
\end{equation*}
We reformulate the problem in Mayer form. Introduce the variable \(x_3 = t\):
\begin{equation*}
    \begin{cases}
        \dot{x}_1 = x_2, \\
        \dot{x}_2 = -x_1 + u, \\
        \dot{x}_3 = 1,\\
    \end{cases}
   \end{equation*}
with initial and terminal conditions:
$$x_1(0)=2,\; x_2(0)=0,\; x_3(0)=0,\; x_2(T)=0,$$
and the cost functional \(J = x_3(T) \to \min\).

\subsection{Analytical Solution via the Maximum Principle}

The Hamiltonian is
\begin{equation*}
    H = \psi_1 x_2 + \psi_2(-x_1 + u) + \psi_3.
\end{equation*}
The adjoint system yields
\begin{equation*}
    \dot{\psi}_1 = \psi_2,\quad \dot{\psi}_2 = -\psi_1,\quad \dot{\psi}_3 = 0,
\end{equation*}
with general solution for $\psi_1$ and $\psi_2$ in the form
\begin{equation*}
    \psi_1 = A\sin(t+\varphi),\quad \psi_2 = A\cos(t+\varphi).
\end{equation*}
where $A$ and $\varphi$ are some constants.

According to Pontryagin's maximum principle, the Hamiltonian attains its maximum 
with respect to the control at the optimal solution. Hence,
\begin{equation*}
    u^*(t) = \operatorname{sign}(\psi_2(t)),
\end{equation*}
so \(u = \pm 1\). For \(u = 1\), the phase trajectory of the system is a circle 
centered at \((1,0)\); for \(u = -1\), it is a circle centered at \((-1,0)\). 
Therefore, there is at most one switching time \(\tau\), where \(\psi_2\) changes sign, 
whence
\[
\cos(\tau+\varphi) = 0.
\]

From the transversality conditions \eqref{eq:transversality_free_time}, we obtain
\[
\psi_1(T) = 0, \qquad \psi_3(T) = -1,
\]
and for the free terminal time, the Hamiltonian vanishes (see \eqref{eq:hamiltonian_zero}):
\[
H(T) = 0.
\]
Thus, \(\psi_3 \equiv -1\) and, since \(\psi_1(t) = A\sin(t+\varphi)\), we have
$\sin(T+\varphi) = 0,$
and consequently
\begin{equation*}
    T - \tau = \frac{\pi}{2},
\end{equation*}
i.e., the time from the switching moment to the final point is \(\pi/2\).

\subsubsection{Construction of the Optimal Trajectory}

Collecting the information obtained from the analysis, we conclude (see Fig.  \ref{fig:phase}) that the 
optimal motion starts from the initial point \((2,0)\) with \(u = -1\) and 
follows a circle centered at \((-1,0)\) of radius \(3\). Upon reaching the 
point \((1,-\sqrt{5})\), the control switches to \(u = +1\) and the motion 
continues along a circle centered at \((1,0)\) passing through \((1,\pm\sqrt{5})\). 
The radius of this circle is \(\sqrt{5}\). After the switching, the system 
traverses a quarter of the circle (\(\pi/2\)) and reaches the terminal point 
\((1-\sqrt{5},0)\).

\begin{figure}[h!]
    \centering
\begin{tikzpicture}[scale=1.25]
    \draw[->, thick] (-4.5,0) -- (4.5,0) node[right] {$x_1$};
    \draw[->, thick] (0,-2.8) -- (0,2.8) node[above] {$x_2$};
    
    \fill (-1,0) circle (0.035) node[above right] {$-1$};
    \fill (1,0) circle (0.035) node[above right] {$1$};
    
    \draw[gray, dashed] (-1,0) circle (3);
    \draw[gray, dashed] (1,0) circle ({sqrt(5)});
    
    \coordinate (A) at (2,0);
    \coordinate (P) at (1,{-sqrt(5)});
    \coordinate (F) at ({1 - sqrt(5)},0);
    
    \fill (A) circle (0.035) node[above right] {$2$};
    \fill (P) circle (0.045) node[below right] {$(1,-\sqrt{5})$};
    \fill (F) circle (0.045) node[above left] {$1-\sqrt{5}$};
    
    \draw[very thick, black, ->]
        (2,0)
        arc[start angle=0, end angle={-acos(2/3)}, radius=3];
    
    \draw[very thick, black, ->]
        (P)
        arc[start angle=-90, end angle=-180, radius={sqrt(5)}];
    
    \node[black] at (2.4,-1.2) {$u=-1$};
    \node[black] at (-1.6,-1.2) {$u=+1$};
    
    \draw[dotted] (1,{-sqrt(5)}) -- (1,0);
    \draw[dotted] (1,{-sqrt(5)}) -- (-1,0);
    \draw[dotted] (1,0) -- ({1 - sqrt(5)},0);

 \draw[thick, gray, <->]
        (0.25,0)
        arc[start angle=0, end angle=-acos(2/3), radius=1.25];
    \node at (1.0,-0.7) {$\arccos(2/3)$};

\end{tikzpicture}
    \caption{Phase portrait of the optimal trajectory. The motion starts at 
             $(2,0)$, switches at $(1,-\sqrt{5})$, and ends at $(1-\sqrt{5},0)$.}
    \label{fig:phase}
\end{figure}

From geometry, the switching time is
\begin{equation*}
    \tau = \arccos\frac{2}{3}.
\end{equation*}
Consequently,
\begin{equation*}
    T = \frac{\pi}{2} + \arccos\frac{2}{3}.
\end{equation*}
Note that the solution of the system for \(u = -1\) is
\begin{equation*}
    \begin{cases}
        x_1(t) = 3\cos t - 1, \\
        x_2(t) = -3\sin t,
    \end{cases}
\end{equation*}
and therefore the motion occurs in the region \(x_2 \le 0\) (lower arcs of the circles).

From the condition \(H(T) = 0\), we obtain
\begin{equation*}
    \psi_2(T)(-x_1(T) + u(T)) - 1 = 0,
\end{equation*}
which yields \(\psi_2(T) = 1/\sqrt{5}\).

Thus, the optimal control has the form
\begin{equation*}
    u(t) =
    \begin{cases}
        -1, & t \in [0, \arccos(2/3)], \\
        +1, & t \in [\arccos(2/3), \arccos(2/3) + \pi/2].
    \end{cases}
\end{equation*}
The minimum time is
\begin{equation*}
    T = \arccos\frac{2}{3} + \frac{\pi}{2}.
\end{equation*}

\subsubsection{Verification of regularity}

For the harmonic oscillator problem, the terminal constraint is \(x_2(T) = 0\). 
We treat it as an equality constraint, so in the penalty formulation the 
terminal term is
\begin{equation*}
    \phi_{\mathrm{term}}(x,u) = |x_2(T)|.
\end{equation*}

For a single equality constraint, the classical regularity condition (linear 
independence of gradients) is trivially satisfied since the gradient 
\(\nabla\Phi^1 = (0,1)\) is nonzero. Consequently, for any 
point with \(\phi_{\mathrm{term}} > 0\) sufficiently close to the optimum, the 
subdifferential of \(\phi_{\mathrm{term}}\) is either \(\{1\}\) or \(\{-1\}\), and 
\(\operatorname{dist}(0,\partial\phi_{\mathrm{term}}) = 1\). Thus, USC holds with 
\(a = 1\).

\subsection{Numerical Solution}
The control \(u(t)\) is approximated by a piecewise constant function on a uniform grid with \(N = 200\) intervals. The state trajectory is obtained by numerically integrating the differential equation using the classical fourth-order Runge--Kutta method. Thus, the dynamical constraints \(\dot{x} = f(x,u)\) are satisfied by construction -- there is no need to penalize them. The optimization variables are only the control values \(u_k\) (\(k = 1,\dots,N\)) and the final time \(T\). The control bound \(|u| \le 1\) is handled by the parameterization \(u_k = \tanh(10\theta_k)\) with unconstrained \(\theta_k \in \mathbb{R}\). The objective functional reduces to
\[
J(T,\theta) = T + \rho |x_2(T)|,
\]
where \(\rho > 0\) is a penalty parameter (taken as \(\rho = 100\)) and the term \(|x_2(T)|\) enforces the terminal condition.

The resulting unconstrained optimization problem is solved using the 
Nel\-der--Mead simplex method (as implemented in \textsc{Matlab}'s 
\texttt{fminsearch} function). While more advanced algorithms exploiting 
essential nonsmooth properties exist (see, e.g., \cite{DemyanovAbbasov_2010}), 
the simple derivative-free method suffices for the purposes of this 
illustrative example. This derivative-free method is particularly suitable for the present formulation, as the objective functional is nonsmooth due to the absolute value term \(|x_2(T)|\) and the use of numerical integration. Despite these nonsmooth features, the method converges reliably from a simple initial guess: the control is set to \(u(t) = -1\) on the first third of the interval and \(u(t) = +1\) thereafter (a plausible bang--bang structure with a switching point), and the final time is initialized as \(T = 3.5\).

The optimal time obtained is \(T = 2.41237\), which agrees with the analytical value \(T^* = 2.41186\) to within \(5\times10^{-4}\). The computed control exhibits the expected bang--bang structure with a single switching (Fig.~\ref{AM_fig:oscillator}), and the phase portrait follows the theoretical arcs of circles. These results confirm the effectiveness of the exact penalty approach.

\begin{figure}[h]
    \centering
    \includegraphics[width=0.95\textwidth]{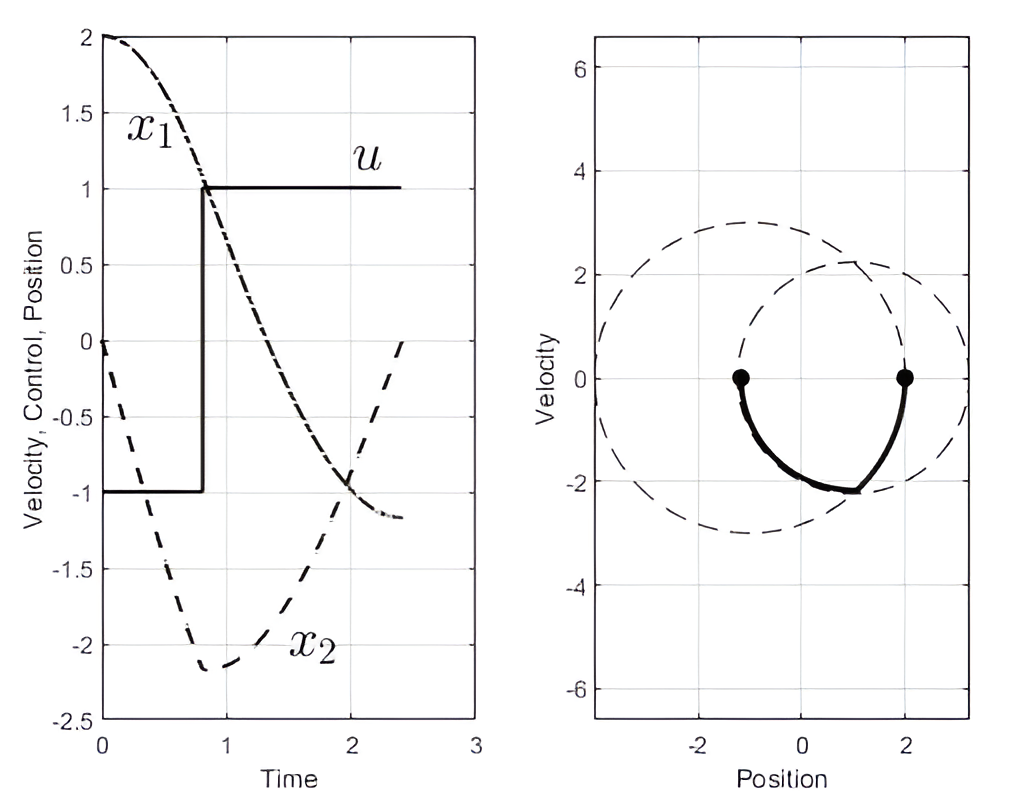}
    \caption{Optimal trajectory (position $x_1$ and velocity $x_2$) and control $u$ for the harmonic oscillator problem, obtained by the exact penalty method with $N = 200$, $T_{\text{opt}} = 2.41237$. The control exhibits a bang--bang structure with a single switching, and the velocity vanishes at the terminal time.}
    \label{AM_fig:oscillator}
\end{figure}

\section{Conclusions}
\label{sec:conclusions}

In this paper we have presented a unified derivation of transversality conditions 
in optimal control problems using exact penalty functions. The exact penalty 
approach provides a remarkably concise derivation compared to classical methods. 
While the classical approach requires the construction of a cone of endpoint 
variations and separating hyperplane arguments, the penalty method yields the 
same conditions through a straightforward variation of the penalty functional 
and elementary properties of subdifferentials. This demonstrates the power and 
elegance of nonsmooth analysis in optimal control theory.

The main contributions and novel aspects of this work can be summarized as follows:

\begin{enumerate}
    \item \textbf{Unified regularity condition.} We have shown that the classical 
    regularity conditions -- linear independence of gradients for equality 
    constraints and the Mangasarian--Fromovitz condition for inequalities -- can be 
    expressed in a single, compact form 
    $\operatorname{dist}\bigl(0, \partial\phi(z,u)\bigr) \ge a > 0$ in a neighborhood 
    of the admissible set, where $\phi$ is the exact penalty function. This condition 
    arises naturally from the analysis of the penalty functional and serves as the 
    only regularity requirement needed for the derivation of transversality 
    conditions. The equivalence with the classical assumptions is established via 
    Gordan's theorem.

\item \textbf{Reduction to terminal constraints.} A key observation is that 
the subdifferential of the differential constraint term always contains the 
origin ($0 \in \partial\phi_{\mathrm{diff}}(z,u)$ whenever $\phi_{\mathrm{diff}} = 0$). 
In the exact penalty formulation (and in our numerical discretization, where 
the dynamics are satisfied by construction), we have $\phi_{\mathrm{diff}} = 0$ 
 whenever 
the dynamics of the system is satisfied. Consequently, if the USC holds for the full penalty 
function, i.e., $\operatorname{dist}(0, \partial\phi(z,u)) \ge a$ for points with 
$\phi(z,u) > 0$, then due to the decomposition $\partial\phi = \partial\phi_{\mathrm{diff}} + \partial\phi_{\mathrm{term}}$ 
and the fact that $0 \in \partial\phi_{\mathrm{diff}}$, we must have 
$\operatorname{dist}(0, \partial\phi_{\mathrm{term}}(z,u)) \ge a$ for such points. 
Thus, the USC reduces to a condition involving only the terminal part. This simplifies the analysis considerably, as the differential constraints are automatically satisfied in the exact penalty formulation.

    \item \textbf{Complete derivation of transversality conditions.} Using only 
    classical variations and subdifferential calculus, we have derived trans\-ver\-sa\-li\-ty 
    conditions for all major cases encountered in optimal control theory: fixed and 
    free terminal time, equality and inequality constraints, moving manifolds, and 
    free left endpoint. The approach is concise and transparent, avoiding the need 
    for constructing cones of endpoint variations or applying separation theorems.

    \item \textbf{Extension to nonsmooth constraints.} Unlike classical regularity 
    conditions that require differentiability of the active constraints, the 
    Unified Separation Condition (USC) applies directly to problems with 
    nonsmooth terminal constraints. A simple illustration is the constraint 
    $|x(T)| = 2$, which is nondifferentiable at the admissible points 
    $x(T) = \pm 2$; classical constraint qualifications are not defined, 
    yet USC holds with $a = 1$. This demonstrates that the exact penalty 
    framework naturally extends beyond the smooth setting.

    \item \textbf{Numerical verification.} The theoretical results are complemented 
    by a numerical implementation for the time-optimal control of a harmonic 
    oscillator. The control is parameterized as $u = \tanh(10\theta)$ to enforce the 
    bound $|u| \le 1$, and the state trajectory is obtained by fourth-order 
    Runge--Kutta integration, so the differential constraints are satisfied by 
    construction. The resulting unconstrained optimization problem is solved using 
    the Nelder--Mead simplex method. The computed optimal time $T = 2.41237$ agrees 
    with the analytical value $T^* = \arccos(2/3) + \pi/2 \approx 2.41186$ to within 
    $5\times10^{-4}$, confirming the practical applicability of the proposed 
    methodology.
\end{enumerate}

These results demonstrate that the exact penalty approach not only reproduces 
classical necessary conditions but also offers a unified, elegant, and 
computationally verifiable framework for their derivation. The fact that all 
major transversality conditions follow from the Unified Separation Condition 
(USC) highlights the power and coherence of constructive nonsmooth analysis in optimal 
control theory.

Future research directions include extending the approach to problems with 
phase constraints, where the USC must be adapted 
to handle state-space restrictions that are not purely terminal. Another 
important direction is the development of stable essentially nonsmooth 
first- and second-order numerical algorithms involving an appropriate 
technique of discretization for different classes of optimal control 
problems. This would significantly enhance the applicability and efficiency 
of the approach. Furthermore, the application of the proposed methodology 
to problems with impulsive (or hybrid) controls -- where discontinuities 
in the state arise -- represents a challenging but promising extension. 
Finally, investigating sufficient optimality conditions based on the 
constructed penalty function and designing more efficient numerical 
schemes for larger-scale optimal control problems remain important 
topics for future research.

\section*{Acknowledgments}
The author acknowledges the use of Perplexity AI and DeepSeek for language text editing and for assistance in writing the code for 
the numerical experiments. All scientific content, including the 
problem formulation, theoretical results, proofs, and analysis, 
is the original work of the authors. The author takes full 
responsibility for the integrity and accuracy of the entire 
manuscript.

\end{document}